\newcommand{\no}[1]{#1}
\renewcommand{\no}[1]{}  \newcommand{\upDelta}{\Delta} 
\renewcommand{\Delta}{\upDelta}
\date{\today}
\numberwithin{equation}{section}%
\newtheorem{theorem}{Theorem}[section]
\newtheorem{proposition}{Proposition}[section]
\newtheorem{definition}{Definition}[section]
\newtheorem{corollary}{Corollary}[section]
\theoremstyle{definition}
\newtheorem{remark}{Remark}[section]
\DeclareMathOperator{\Vol}{Vol}
\DeclareMathOperator{\WF}{WF}
\newcommand{\eps}{\varepsilon}
\newcommand{\R}{{\bf R}}
\newcommand{\Id}{\mbox{Id}}
\renewcommand{\r}[1]{(\ref{#1})}
\newcommand{\PDO}{$\Psi$DO}
\newcommand{\be}[1]{\begin{equation}\label{#1}}
\newcommand{\ee}{\end{equation}}
\renewcommand{\d}{\mathrm{d}}
\newcommand{\bo}{\partial M}
\newcommand{\B}{B}
\title[The geodesic ray transform  on Riemannian surfaces with conjugate points]{The geodesic ray transform on Riemannian surfaces  with conjugate points}
\author[F. Monard]{Fran\c{c}ois Monard}
\address{Department of Mathematics, University of Washington, Seattle, WA 98195}
\thanks{First author partly supported NSF grant No.~1265958}
\author[P. Stefanov]{Plamen Stefanov}
\address{Department of Mathematics, Purdue University, West Lafayette, IN 47907}
\thanks{Second author partly supported by a NSF  Grant DMS-1301646}
\author[G. Uhlmann]{Gunther Uhlmann}
\address{Department of Mathematics, University of Washington, Seattle, WA 98195}
\thanks{Third author partly supported NSF grant No.~1265958 and a Simons Fellowship}
\begin{document}

\begin{abstract}
We study the geodesic X-ray transform $X$ on compact Riemannian surfaces with conjugate points. Regardless of the type of the conjugate points, we show that we cannot recover the singularities and therefore, this transform is always unstable (ill-posed). We describe the microlocal kernel of $X$ and relate it to the conjugate locus. We present numerical examples illustrating the cancellation of singularities. We also show that the attenuated X-ray transform is well posed if the attenuation is positive and there are no more than two conjugate points along each geodesic; but still ill-posed, if there are three or more conjugate points. Those results follow from our analysis of the weighted X-ray transform. 
\end{abstract}

\maketitle

\section{Introduction} The purpose of this paper is to study the X-ray transform on Riemannian surfaces over geodesics  with  conjugate points. 
 Let $\gamma_0$ be a fixed directed geodesic  on a Riemannian manifold of dimension $n\ge2$, and let $f$ be a function which  support does not contain the endpoints of $\gamma_0$. We study first the following \textit{local  problem}: what part of  the wave front set $\WF(f)$ of $f$ can be obtained from knowing the wave front of the (possibly  weighted) integrals 
\be{01}
Xf(\gamma) =\int \kappa(\gamma(s),\dot\gamma(s)) f(\gamma(s))\, \d s
\ee
of $f$ along all (directed) geodesics $\gamma$ close enough to $\gamma_0$? The analysis can be easily generalized to more general geodesic-like curves as in \cite{FSU} or to the even more general case of ``regular exponential maps''  \cite{Warner_conjugate}  as in \cite{SU-caustics}. For the simplicity of the exposition, we consider the geodesic case only. 
Since $X$ has a Schwartz kernel with singularities of conormal type, $Xf$ could only provide information about $\WF(f)$ near the conormal bundle ${N}^*\gamma_0$ of $\gamma_0$. It is well known that if there are no conjugate points on $\gamma_0$, we can in fact recover $\WF(f)$ near $N^*\gamma_0$. This goes back to Guillemin \cite{Guillemin85, GuilleminS} for integral transforms satisfying the Bolker condition, see also \cite{Quinto94}. The latter allows the use of the clean intersection calculus of Duistermaat and Guillemin \cite{DuistermaatG} to show that $X^*X$ is a pseudo-differential operator (\PDO), elliptic when $\kappa\not=0$. 
      In the geodesic case under consideration, a microlocal study of $Xf$ has been done in \cite{SU-Duke, SU-JAMS, FSU, SU-AJM, SU-caustics,UV:local}, and in some of those works,  $f$ can  even be a tensor field. 

In the presence of conjugate points, the Bolker condition fails, see also Proposition~\ref{pr-Bolker}. Then $X^*X$ is not a \PDO\ anymore, and the standard arguments do not apply. Moreover, one can easily construct examples with $\kappa=1$  based on the sphere \cite{SU-caustics} where the localized X-ray transform has an infinitely dimensional kernel of  distributions singular at or near $N^*\gamma_0$. Two delta functions of opposite signs placed at two anti-podal points is one such case. In \cite{SU-caustics}, the second and the third author studied this question assuming that  the conjugate points on $\gamma_0$ are of fold type, i.e., $v\mapsto \exp_xv$ has fold type singularities \cite{Arnold_caustics, Golubitsky_G}. In dimension $n=2$, we proved there that there is a loss of $1/4$ derivatives, and we describes the microlocal kernel of $X$ modulo that loss.  The analysis there was based on the specific properties of the fold conjugate points, and was focused on understanding the structure of $X^*X$ as a sum of a \PDO\  plus a  Fourier Integral Operator (FIO) associated with the conormal bundle $N^*\Sigma$ of the conjugate locus $\Sigma$, see next section. The microlocal structure of $X^*X$ in all dimensions was also studied by Sean Holman \cite{Sean-X}.  
 In this work, we show that there is a loss of all derivatives, and that this is true for all possible types of conjugate points. Instead of studying $X^*X$, we study $X$ itself as an FIO; show that the singularities in the kernel are related by a certain FIO;  and describe its canonical relation as a generalization of $N^*\Sigma'$ even if the latter may not be smooth.  

Since the answer to the local problem in two dimensions is affirmative when there are no conjugate points, and we show that it is negative when there are, the present paper gives a complete answer to the local problem for $n=2$ ($Xf$ known near a single $\gamma_0$) with the exception of the borderline case when the conjugate points are on $\bo$.

The \textit{global problem}, recovery of $\WF(f)$, and ultimately $f$ from $Xf$ known for all (or for a ``large'' set of) geodesics is different however. Let $M$ be two dimensional and non-trapping so that $Xf$ is defined globally. The union of $N^*\gamma$ for all directed geodesics in $M$ is a double cover of $T^*M\setminus 0$, and for each $(x,\xi) \in T^*M\setminus 0$, $\WF(f)$ there can be possibly resolved by $Xf$ known near each of the two directed geodesics through $x$ normal to $\xi$. We therefore have a system of two equations, and if the weight is not even with respect to  the direction, then that system is  solvable for a pair of conjugate points, provided that some determinant does not vanish, see \r{R1}. If the weight is even, then the global problem is equivalent to the local one. 
Condition \r{R1} is naturally satisfied for the geodesic attenuated X-ray transform with positive attenuation. 
Those two equations however are not enough to resolve the singularities at three points conjugate to each other, and then we still cannot recover $\WF(f)$.  
Those results are formulated in Section~\ref{sec5a}. 

Recovery of $\WF(f)$ is directly related to the question of stability of the geodesic  X-ray transform (or any other linear map) --- given $Xf$, can we recover $f$ in a stable way? 
We do not study the uniqueness question here but we will just mention that if $(M,g)$ is real analytic, in many cases, even ones with conjugate points, one can have injectivity based on analytic microlocal arguments. An example of that is a small tubular neighborhood of $\gamma_0$ as above;  then the geodesics normal or close to normal to $\gamma_0$  carry enough information to prove injectivity based on analyticity arguments as in \cite{FSU, SU-AJM, BQ1, Venky09}, for example. Stability however, will be always lost for even weights, for example, if there are conjugate points. 
The attenuated X-ray transform is stable with one or no conjugate point along each geodesic, and unstable otherwise. The term stable still makes sense even  if there is no injectivity; then it indicates that  estimate \r{5.1cc} holds.

This linear problem is also in the heart of the non-linear problem of recovery a metric or a sound speed (a conformal factor) from the lengths of the geodesics measured at the boundary (boundary rigidity) 
or from knowledge of the lens relation (lens rigidity), see, e.g., \cite{Croke04,Croke91,CDS, PestovU,Sh-book,SU-lens, SU-Kawai,SU-JAMS,SU-MRL}; or from knowledge of the hyperbolic Dirichlet-to-Neumann (DN) map \cite{BelishevK92, BellassouedDSF,Carlos_12,Sh-2D,SU-IMRN,SU-JFA,BaoZhang}. It is the linearization of the first two ($f$ is a tensor field then); and the lens relation is directly related to the DN map and its canonical relation as an FIO. 
Although fully non-linear methods for uniqueness (up to isometry) exist, see, e.g., \cite{BelishevK92}, stability is always derived from stability of the linearization. Very often, see for example \cite{SU-JAMS}, even uniqueness is derived from injectivity \textit{and stability} of the linearization, see also \cite{SU-JFA} for an abstract treatment.  Understanding the stability of $X$ is therefore fundamental for all those problems. 

In seismology, recovery of the jumps between layers, which mathematically are conormal singularities, is actually the main goal. The model then is a linearized map like $X$ or a linearized DN map; and the goal is to recover the visible part of the wave front set. 

In dimensions $n\ge3$, the problem is over-determined. If there is a \textit{complete} set of geodesics without conjugate points which conormal bundle covers $T^*M$, then $\WF(f)$ can be recovered \cite{SU-AJM}. In case this is not true or when we have local information only, there can be instability, for example for metrics of product type. In \cite{SU-caustics}, we formulated a  condition for fold conjugate points, under which singularities can still be recovered because then the ``artifact'' (the $F_{21}$ term in Theorem~\ref{thm_cancel}) 
are of lower order. At present it is not clear however if there are metrics satisfying that condition; and even of they are, the analysis does not cover non-fold conjugate points. Therefore, this problem remains largely open in dimensions $n\ge3$. 

The structure of the paper is as follows. In Section~\ref{sec_RM}, we present some facts about the structure of the conjugate points. In Section~\ref{sec3}, we characterize $X$ as an FIO. The main theoretical results are in Section~\ref{sec4} and Section~\ref{sec5a}. Numerical evidence is presented in Section~\ref{sec5}.

\section{Regular exponential maps and their generic singularities}  \label{sec_RM}
\subsection{Regular exponential maps} 
Let $(M,g)$ be a fixed $n$-dimensional Riemannian manifold. We recall some known facts about the structure of the conjugate points, mostly due Warner \cite{Warner_conjugate}. Most of the  terminology comes from there but some is taken from \cite{SU-caustics}.

\subsection{Generic properties of the conjugate locus}
We recall here the main result by Warner  \cite{Warner_conjugate} about the regular points of the conjugate locus of a fixed point $p$. In fact, Warner considers more general exponential type of maps but we restrict our attention to the geodesic case (see also \cite{SU-caustics} for the general case). 
The \textbf{\emph{tangent conjugate locus}} $S(p)$ of $p$  is the set of all vectors $v\in T_pM$ so that $\d_v\exp_p( v)$ (the differential of $\exp_p(v)$ w.r.t.\ $v$) is not an isomorphism.
We call such vectors \textbf{\emph{conjugate vectors}} at $p$ (called conjugate points in \cite{Warner_conjugate}).  The kernel of $\d_v\exp_p(v)$ is denoted by $N_p(v)$. It is part of $T_vT_pM$ that we identify with $T_pM$. By the Gauss lemma, $N_p(v)$ is orthogonal to $v$. 
The images of the conjugate vectors under the exponential map $\exp_p$ will be called \textbf{conjugate points} to $p$. The image of $S(p)$ under the exponential map $\exp_p$ will be denoted by $\Sigma(p)$ and called the \textbf{conjugate locus of $p$}, whoc may not be smooth everywhere. Note that $S(p)\subset T_pM$, while $\Sigma(p)\subset M$. We always work with $p$ near a fixed $p_0$ and with $v$ near a fixed $v_0$. Set $q_0=\exp_{p_0}(v_0)$. Then we are interested in $S(p)$ restricted to a small neighborhood of $v_0$, and in $\Sigma(p)$ near $q_0$. Note that $\Sigma(p)$ may not contain all points near $q_0$ conjugate to $p$ along some geodesic; and may not contain even all of those along $\exp_{p_0}(tv_0)$ if the later self-intersects --- it contains only those that are of the form $\exp_p(v)$ with $v$ close enough to $v_0$.

We denote by $\Sigma$ the set of all conjugate pairs $(p,q)$ localized as above. In other words, $\Sigma=\{(p,q);\; q\in \Sigma(p)\}$, where $p$ runs over a small neighborhood of $p_0$. Also, we denote by $S$ the set $(p,v)$, where $v\in S(p)$. 
 
A \textbf{\emph{regular conjugate vector}}  $v$ at $p$ is defined by the requirement that  there exists a neighborhood $U$  of $v$, so that any radial ray $\{tv\}$ of $T_pM$ contains at most one conjugate vector  in $U$.  The regular conjugate locus then is an everywhere dense open subset of the conjugate locus and  is an embedded  $(n-1)$-dimensional manifold. The order of a conjugate vector as a singularity of $\exp_p$ (the dimension of the kernel of the differential) is called an order of the conjugate vector. 

In \cite[Thm~3.3]{Warner_conjugate}, Warner characterized the  conjugate vectors at a fixed $p_0$ of order at least $2$, and some of those of order $1$, as described below. Note that in $(\B_1)$, one needs to postulate that $N_{p_0}(v)$ remains tangent to $S(p_0)$ at points $v$ close to $v_0$ as the latter is not guaranteed by just assuming that it holds at $v_0$ only. 

\smallskip  
\textbf{($\mathbf{F}$)  Fold conjugate vectors.} Let $v_0$ be a regular  conjugate vector at $p_0$, and let $N_{p_0}(v_0)$ be one-dimensional and transversal to $S(p_0)$.
Such singularities are known as fold singularities.   
Then one can find local coordinates $\xi$ near $v_0$ and $y$ near $q_0$ so that in those coordinates, $y= \exp_{p_0}\xi$ is given by 
\be{2.1}
y' = \xi', \quad y^n=(\xi^n)^2.
\ee
Then $\det \d_\xi \exp_{p_0}\xi = 2\xi^n$ and 
\be{2.1n}
S(p_0)=\{\xi^n=0\}, \quad N_{p_0}(v_0)=\mbox{span}\left\{ \partial/\partial \xi^n\right\}, \quad\Sigma(p_0) = \{y^n=0\}. 
\ee
Since the fold condition is stable under small $C^\infty$ perturbations, as follows directly from the definition, those properties are preserved under a small perturbation of $p_0$. 
\smallskip  

\textbf{($\mathbf{B}_1$) Blowdown of order 1.}   
Let $v_0$ be a regular  conjugate vector at $p_0$ and let $N_{p_0}(v)$ be one-dimen\-sional. Assume also that  $N_{p_0}(v)$ is tangent to $S(p_0)$ for all regular conjugate $v$ near $v_0$. 
We call such singularities  blowdown of order 1. 
Then locally, $y= \exp_{p_0}\xi$ is represented in suitable coordinates by
\be{2.1b}
y'=\xi', \quad y^n = \xi^1\xi^n.
\ee  
Then $\det \d_\xi \exp_{p_0}\xi = \xi^1$ and 
\be{2.1bb}
S(p_0) = \{\xi^1=0\}, \quad N_{p_0}(v_0)=\mbox{span}\left\{ \partial/\partial \xi^n\right\},  \quad \Sigma(p_0) = \{y^1=y^n=0  \}. 
\ee
Even though we postulated that the tangency condition is stable under perturbations of $v_0$, it is not stable under a small perturbation of $p_0$, and  the type of the singularity may change then. In some symmetric cases, one can check directly that the type is locally preserved. 
\smallskip  

\textbf{($\mathbf{B}_k$)   Blowdown of order $k\ge2$.} Those are regular conjugate vectors in the case where $N_{p_0}(v_0)$ is  $k$-dimensio\-nal, with $2\le k\le n-1$. Then in some coordinates,  $y= \exp_{p_0}\xi$ is represented as
\be{2.1c}
\begin{split}
y^i &=\xi^i,\qquad i =1,\dots,n-k\\ 
y^i &= \xi^1\xi^i, \quad i =n-k+1,\dots,n.
\end{split}
\ee
Then $\det \d_\xi \exp_{p_0}\xi = (\xi^1)^k$ and 
\be{2.1cn}
\begin{split}
S(p_0) &= \{\xi^1=0\}, \quad N_{p_0}(v_0)=\mbox{span}\left\{\partial/\partial{\xi^{n-k+1}},\dots,\partial/\partial{\xi^n}\right\},\\
\Sigma(p_0) &= \{y^1=y^{n-k+1}= \dots =y^n=0  \}.
\end{split}
\ee
In particular, $N_{p_0}(v_0)$ must be tangent to $S(p_0)$, see also \cite[Thm~3.2]{Warner_conjugate}. This singularity is unstable under perturbations of $p_0$, as well. A typical example are  the antipodal points on $S^{n}$, $n\ge3$; then $k=n-1$. Note that in this case, the defining equation $\det \d_v \exp_{p_0}\xi=0$ of $S(p_0)$ is degenerate, and $n\ge3$. 

\subsection{The 2D case} 
If $n=2$, then the order of the conjugate vectors must be one since in the radial direction the derivative of the exponential map is non-zero.  
Only (F) and ($B_1$) regular conjugate points are possible among the types listed  above. For each $p_0$ and $v_0\in S(p_0)$, then 
\be{alt}
\text{Either $N_{p_0}(v_0)$ is transversal to $S(p_0)$; or $N_{p_0}
(v_0)= T_{v_0}S(p_0)$}
\ee
In the first case, $v_0$ is of fold type. The second case is more delicate and depends of the order of contact of $N_{p_0}(v_0)$ with $S(p_0)$. To be more precise, let $\xi$ be a smooth non-vanishing vector field  along $S(p_0)$ so that at each point $v\in S(p_0)$, $\xi$ is collinear with $N_{p_0}(v)$. Let $\kappa$ be a smooth function on $T_{p_0}M$ with a non-zero differential so that $\kappa=0$ on $S(p_0)$. Then $\d \kappa(\xi)$ restricted to $S(p_0)$, has a zero at $v=v_0$, see also \cite{Golubitsky_G} for this and for the definition below. 

\begin{definition}\label{def_cusp}
If this zero is simple, we say that $v_0$ is a simple cusp. 
\end{definition}

Near a simple cusp, the exponential map has the following normal form \cite{Golubitsky_G} 
\[
y^1 = \xi^1,\quad y^2 = \xi^1\xi^2 +(\xi^2)^3.
\]
Then $\det \d_\xi \exp_{p_0}\xi = \xi^1+3(\xi^2)^2$ and
\be{2.1cusp}
\begin{split}
S(p_0) &= \{\xi^1 +3(\xi^2)^2=0\}, \quad N_{p_0}(v_0)=\mbox{span}\left\{\partial/\partial{\xi^{2}}\right\},\\
\Sigma(p_0) &= \{y^1 = -3t^2,\; y^2 = -2t^3,\; |t|\ll1 \}.
\end{split}
\ee
Note that away from $(\xi^1,\xi^2)=(0,0)$, we have a fold since then the first alternative of \r{alt} holds.  Such a cusp is clearly visible in our our numerical experiments in Figure~\ref{pic4}. 

Regardless of the type of the conjugate vector $v$, the tangent conjugate locus $S(p)$ to any $p$ is a smooth curve. This follows form  \cite{Warner_conjugate} since the order of any conjugate vector is always one. Its image $\Sigma(p)$ under the exponential map however is locally a smooth curve if $v$ is a fold, a point, if $v$ is of ($B_1$) type, and a curve with a possible singularity at $v$ when the second alternative in \r{alt} holds. 

\section{The geodesic X-ray transform as an FIO} \label{sec3}
The properties of $X$ as an FIO have been studied in \cite{Greenleaf-Uhlmann}, including those of the restricted X-ray transform, in the framework of Guillemin \cite{Guillemin85} and Guillemin and Stenberg \cite{GuilleminS}. We will recall the results in \cite{Greenleaf-Uhlmann} with some additions. 

\subsection{The general $n$ dimensional case} Assume  first  $n:=\dim M\ge2$. 
 We extend the manifold and the metric to some neighborhood of $M$.  
We will study $X$ restricted to geodesics in some open set  $\mathcal{M}_0$ of geodesics with endpoints outside $M$.   
 It is clear (and shown below) that $\mathcal{M}_0$ is a manifold. The set of all geodesics $\mathcal{M}$ might not be, if there are trapped ones. We assume first the following
\be{odd}
\gamma(t)\in \mathcal{M}_0\quad  \Longrightarrow \quad \gamma(-t)\not\in \mathcal{M}_0.
\ee
This condition simplifies the exposition. In fact later, we study geodesic manifolds for which the opposite holds, and the general case can be considered as a union of the two. The reason for \r{odd} is to guarantee that for any $\gamma\in \mathcal{M}_0$, each element in $N^*\gamma$ is conormal to exactly one geodesic in  $\mathcal{M}_0$.
Let $M_0$ be the points   on the geodesics in $\mathcal{M}_0$, in the interior of $M$.  
We will study $X$ acting on distributions $f$ supported  $\mathcal{M}_0$. In particular, this covers the case of geodesics in some  small enough neighborhood of a fixed geodesic  $\gamma_0$ as shown in Figure~\ref{pic0}.

\begin{figure}[!ht] 
  \centering
  \includegraphics[width = 3.5in,page=1]{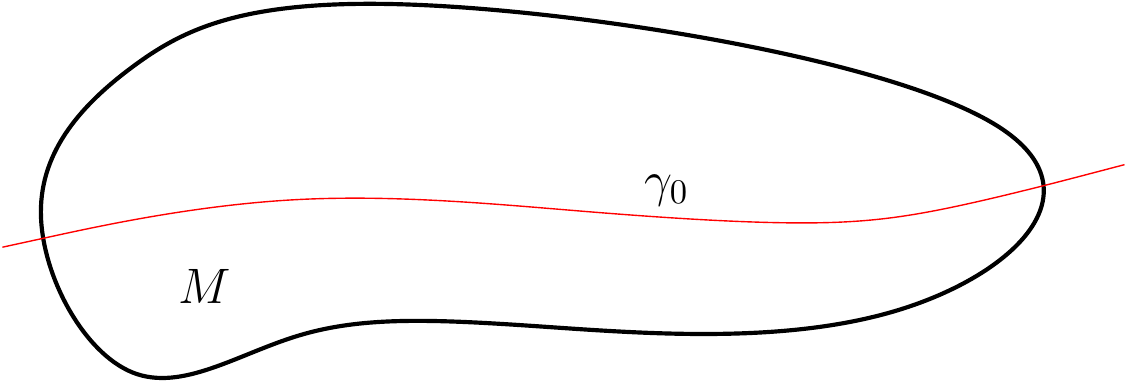}
  \caption{The manifold $M$ and a fixed geodesic $\gamma_0$.} 
  \label{pic0}
\end{figure}

To parameterize the (directed) geodesics near some $\gamma_0$, we choose a small oriented  hypersurface 
 $H$ intersecting $\gamma_0$ transversally.  It can be a neighborhood  of $\bo$ near $\gamma(a)$ or $\gamma(b)$ if $\gamma_0$ hits $\bo$ transversely at that particular end. 
Let $\d\Vol_H$ be the induced measure in $H$, and let $\nu$ be a smooth unit normal vector field on $H$ consistent with the orientation of $H$. Let $\mathcal{H}$ consist of all $(p,\theta)\in SM$ with the property that $p\in H$ and $\theta$ is not tangent to $H$, and positively oriented, i.e., $\langle\nu,\theta \rangle>0$.  Introduce the measure $\d\gamma = \langle n,\theta\rangle\,\d\Vol_H(p)\,\d \sigma_p(\theta) $ on $\mathcal{H}$. 
Then one can parametrize all geodesics intersecting $H$ transversally by their intersection $p$ with $H$ and the corresponding direction, i.e., by elements in $\mathcal{H}$. An important property of $\d\gamma$ is that it introduces a measure on that geodesics set that is invariant under a different choice of $H$ by the Liouville Theorem, see e.g., \cite{SU-Duke}. 

We can project $\mathcal{H}$ onto the unit ball bundle $BH$ by projecting orthogonally each $\theta$ as a above to $T_pH$. The measure $\d\mu$ in this representation becomes the induced measure on $TH$, and in particular, the factor $\langle n,\theta\rangle$ (which does not even make sense in this representation) disappears. We think of  $BH$ as a chart for the $2n-2$ dimensional  geodesic manifold $\mathcal{M}_0$ (near $\gamma_0$).  Then $B^*H$ (which we identify with $BH$) has a natural symplectic form, that of $T^*H$. We refer to \cite{Greenleaf-Uhlmann} for an invariant definition of $\mathcal{M}_0$, in fact, they did that in two ways. It also has a natural volume form, which is invariantly defined on $\mathcal{M}_0$. Tangent vectors to $\mathcal{M}_0$  can naturally be identified with the Jacobi fields modulo the two trivial ones, $\dot\gamma(t)$ and $t\dot\gamma(t)$.

We view $X$ as the following (continuous) map
\[
 X:C_0^\infty(M_0) \to C^\infty(\mathcal{M}_0). 
\]
  The Schwartz kernel of $X$ is clearly a delta type of distribution which indicates that it is also a (conormal) Lagrangian distribution, and therefore $X$ must be an FIO. To describe it in more detail, we use the double fibration point of view of \cite{GelfandGS_69,Helgason-Radon}.  
  Let 
\[
Z_0 = \{  (\gamma,x)\in \mathcal{M}_0\times M_0;\; x\in\gamma   \}
\]
be the point-geodesic relation with $\pi_{\mathcal M}: Z_0\to \mathcal{M}_0$ and $\pi_M: Z_0\to M_0$ being the natural left and right projections. It follows form the analysis below that $Z_0$ is smooth of dimension $2n-1$. 
 Then
\begin{center}
\begin{tikzcd}[]
&  Z_0 \arrow{dl}[swap]{\pi_{\mathcal M}}\arrow{dr}{\pi_M} & \\
\mathcal{M}_0 &&M_0
\end{tikzcd}
\end{center}
is a double fibration in the sense of \cite{GelfandGS_69}. Note that we switched left and right here compared to some other works in order to get a canonical relation later with complies with the notational convention in  \cite{Hormander4}. 
To get the microlocal version of that, following Guillemin \cite{Guillemin85}, let 
\[
N^*Z_0 = \left\{    (\gamma,\Gamma,x,\xi)\in T^*(\mathcal{M}_0\times M_0)\setminus 0 ;\; (\gamma,\Gamma)=0 \;\text{on } T_{(x,\gamma)}Z_0  \right\}
\]
be the conormal bundle of $Z_0$. It is a Lagrangian submanifold of $T^*(\mathcal{M}_0\times M_0)$, and the associated canonical relation is given by the ``twisted'' version of $N^*Z_0$:
\be{4.C}
C = N^*Z'_0 = \left\{   (\gamma,  \Gamma,x,\xi);\; (\gamma,  \Gamma,x,-\xi)\in N^*Z_0     \right\}.
\ee
We then have the microlocal version of the diagram above:
\be{4.2}
\begin{tikzcd}[]
&  C \arrow{dl}[swap]{\pi_{\mathcal M}}\arrow{dr}{\pi_M} & \\
 T^*\mathcal{M}_0&&T^*M_0  
\end{tikzcd}
\ee
where now $\pi_{\mathcal M}$ and $\pi_M$ denote the projections indicated above. It is easy to see that their ranges do not include the zero sections. 

The weighted X-ray transform $X$ then has a Schwartz kernel $\kappa\delta_{Z_0}$ defined by 
\[
\langle \kappa\delta_{Z_0},\phi \rangle =  \int_{\mathcal{M}_0 }\left(   \int_\R \kappa(\gamma(s),\dot\gamma(s))  \phi(\gamma(s), \gamma)\, \d s \right)\d\gamma .
\]
One can think of $\kappa$ as a function defined on $Z_0$. Then $\kappa \delta_{Z_0}$ is a distribution conormal to $Z_0$, see \cite[section~18.2]{Hormander3}  in the class $I^{-n/4}(M_0\times\mathcal{M}_0,Z_0)$. Therefore, 
\[
X : C_0^\infty(M_0)\longrightarrow C_0^\infty(\mathcal{M}_0)
\]
is an FIO of order $-n/4$ associated with the Lagrangian $N^*Z_0$, which can be extended to distributions, as well. Its canonical relation is given by $C$ above.

Let $H$ be as above and fix some local coordinates $y$ on $H$; and let $\eta$ be the dial variable. 
We denote below by  $\gamma(y,\eta,t) $ the geodesic issued from $(y,\tilde\eta)$, where $\tilde\eta$ is unit and has projection $\eta$ on $TH$ (with a fixed orientation). 
In those coordinates, the manifold $Z_0$ consist of all $(y,\eta,x)\in BH\times M$ with the property that $x=\gamma(y,\eta,t)$ for some $t$. We can think of this as a parametric representation 
\be{4.5}
(y,\eta,t) \longmapsto (y,\eta,\gamma(y,\eta,t)).
\ee
Then the map \r{4.5} has Jacobian 
\[ J:= 
\begin{pmatrix}
\Id & 0&0\\
0 &\Id&0\\
 \partial x/\partial y& \partial x/\partial\eta&   \partial x/\partial t 
  \end{pmatrix}.
\]
Here, 
$\partial x/\partial y = \{\partial x^i/\partial y^\alpha\}$ with $i=1,\dots,n$ and $\alpha=1,\dots,n-1$, $x=  \gamma(y,\eta,t)$, and similarly for $\partial x/\partial\eta$. The identity operators above are in $\R^{n-1}$. Since the bottom right   element, the tangent to the geodesic $ \gamma(y,\eta,t)$ is never zero, we get that the $(3n-2)\times (2n-1)$ matrix $J$ has maximal rank $2n-1$. In particular, this shows that $Z_0$ is a smooth manifold of dimension $2n-1$.

The conormal bundle $N^*Z_0$ is at any point of $Z_0$ is the space conormal to the range of $J$. Denote by $(\hat y,\hat\eta,\xi)$ the dual variables of $(y,\eta,x)$. Then  $(y,\eta, \hat y,\hat\eta,x,\xi)\in  C=N^*Z_0'$ if and only if $(y,\eta,x)\in C$ and 
\be{4.6}
\xi_i  \dot \gamma^i(y,\eta,t)  =0,\quad 
\xi_i \frac{\partial   \gamma^i(y,\eta,t) }{\partial y^\alpha}-  \hat y_\alpha =0,\quad 
\xi_i \frac{\partial  \gamma^i(y,\eta,t) }{\partial \eta^\alpha}-  \hat \eta_\alpha =0,\quad \alpha=0,\dots,n-1. 
\ee

The first condition says that $\xi$ is conormal to the geodesic issued from $(y,\eta)$, which is consistent with the fact that we can only hope to recover conormal singularities at the geodesics involved in the X-ray transform.  In particular, we get 
\be{4.6a}
\pi_M(C)=\cup_{\gamma\in \mathcal{M} } N^*\gamma := V.
\ee
What we see immediately is that we have the inclusion $\subset$ above. On the other hand, given $(x,\xi)\in V$, there is a geodesic in $\mathcal{M}_0$ normal to $(x,\xi)$ by the definition of $V$. Then we can compute $\hat y$ and $\hat \eta$ as above, see also the 2D case below. Notice that we  have an $(n-2)$-dimensional manifold of geodesics normal to $(x,\xi)$;  only one, undirected, if $n=2$. In the latter case, we may have two directed ones. 
 In particular this implies that the rank of $\d\pi_M$ is maximal and equal to $2n$ but since $\d\pi_M$ maps locally $\R^{3n-2}$ to $\R^{2n}$, it has an $n-2$ dimensional kernel, if $n\ge3$, and is an isomorphism when $n=2$. 

The next two equations in \r{4.6}  say that the projection of $\xi$ (identified with a vector by the metric) to any non-trivial Jacobi field, at the point $x= \gamma(y,\eta,t)$, is given. Set
\be{4.6b}
\mathcal{V}: = \pi_{\mathcal{M}}(C)
\ee
The projection $\pi_{\mathcal{M}}$ maps the $3n-2$ dimensional $C$ to the $4n-4$ dimensional $T^*\mathcal{M}_0$. 
We want to find out when this allows us to recover $t$, and therefore, $x$. Assume that we have two different values   $t=t_1$ and $t=t_2$ of $t$, at which \r{4.6} holds with the so given $(y,\eta,\hat y, \hat\eta)$ with $(x_1,\xi^1)$ and $(x_2,\xi^2)$, respectively.  Consider all non-trivial Jacobi fields vanishing at $t_1$. They form a linear space of dimension $n-1$. Their projections to $\xi^1$ at $t_1$ vanish in a trivial way. By \r{4.6} and by our assumptions, their projections to $\xi^2$ at $t_2$ must vanish as well. Since $\xi^2$ is not tangent to the geodesic $\gamma(y,\eta,t) $, we get that at $t_2$, those Jacobi fields form a subspace of dimension $n-2$. Therefore, a certain non-trivial linear combination  vanishes there, which means that there is a Jacobi field vanishing at both $t_1$ and $t_2$. Then the corresponding $x_1$ and $x_2$ are conjugate points, as a consequence of our assumption of lack of injectivity. 

This argument also proves that $\d\pi_{\mathcal M}$ is injective as well. Indeed, locally, there are no conjugate points. The problem then can be reduced to showing that $t=t_1$ is a simple root of the equations \r{4.6} with $(y,\eta,\hat y,\hat\eta)$ given, which follows from the fact that the zeros of Jacobian fields are always simple. 

On the other hand, assume that there are no conjugate points along the geodesic $\exp_y(t\eta)$. Then any Jacobi field vanishing at $t=t_1$ would be nonzero for any other $t$. Those (non-trivial) Jacobi fields span an $(n-1)$-dimensional linear space as above at any $t\not=t_1$. On the other hand, they are all perpendicular to $\xi^1$ at $t_1$. Since $\xi^1\not=0$, we get a contradiction.

Therefore, $\pi_{\mathcal{M}}$ is injective if and only if there are no conjugate points along the geodesics in $\mathcal{M}_0$. In particular, it is always locally injective. 
Moreover,   $\d\pi_{\mathcal M}$ is injective as well.

The X-ray transform $X$ is said to satisfy the \textit{Bolker condition} \cite{Golubitsky_G} if $\pi_{\mathcal M}$ is an injective immersion. Then $X^*X$ is a \PDO\ of order $-1$. It is elliptic at $(x,\xi)$ if and only if  $\kappa(x,\theta)\not=0$ for at least one $\theta$ with $\langle\theta, \xi\rangle=0$, see, e.g., \cite{FSU, SU-AJM}. Then one can recover singularities conormal to all geodesics over which we integrate by elliptic regularity. The analysis above yields the following. 
\begin{proposition}\label{pr-Bolker}
The Bolker condition is satisfied for $Z_0$  if and only if none of the geodesics in $\mathcal{M}_0$ has conjugate points. 
\end{proposition}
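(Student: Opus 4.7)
The plan is to verify the two clauses of the Bolker condition separately for the canonical relation $C = N^*Z_0'$: first, that $\pi_{\mathcal{M}}\colon C \to T^*\mathcal{M}_0$ is globally injective, and second, that its differential is pointwise injective. I then show that only the injectivity clause can fail, and it fails precisely in the presence of conjugate points along some geodesic in $\mathcal{M}_0$. Since the surrounding discussion already develops most of what is needed through equations \r{4.6}, the task is essentially one of organization.

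\textbf{Step 1 (global injectivity).} Fix a point $(y,\eta,\hat y,\hat\eta)$ in the image of $\pi_{\mathcal{M}}$ and suppose two preimages $(y,\eta,t_i,\xi^i)$, $i=1,2$, with $t_1\neq t_2$ both lie in $C$. The last two equations of \r{4.6} prescribe the pairings of $\xi^i$ with the Jacobi fields $\partial\gamma/\partial y^\alpha$ and $\partial\gamma/\partial\eta^\alpha$ at $t = t_i$. Consider the $(n-1)$-dimensional space of non-trivial Jacobi fields along $\gamma(y,\eta,\cdot)$ vanishing at $t_1$. At $t = t_2$ each such field must have the same (vanishing) inner product with $\xi^2$ that it had with $\xi^1$ at $t_1$; because $\xi^2$ is nonzero and conormal to $\dot\gamma(t_2)$, the admissible values at $t_2$ lie in a subspace of dimension $n-2$. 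By the pigeonhole/dimension count, some non-trivial Jacobi field vanishes at both $t_1$ and $t_2$, so $\gamma(t_1)$ and $\gamma(t_2)$ are conjugate along $\gamma$. Conversely, given a conjugate pair $(\gamma(t_1),\gamma(t_2))$, the same Jacobi-field construction produces two distinct preimages of a common point in $T^*\mathcal{M}_0$, showing that the presence of conjugate points on any geodesic in $\mathcal{M}_0$ forces $\pi_{\mathcal{M}}$ to fail injectivity.

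\textbf{Step 2 (immersion).} Injectivity of $d\pi_{\mathcal{M}}$ is a linearized statement at a single $(y,\eta,t,\xi)\in C$. Linearizing \r{4.6} in $(t,\xi)$ and using the standard fact that zeros of Jacobi fields along a geodesic are always simple yields a non-degenerate system regardless of what happens far from $t$. Hence $d\pi_{\mathcal{M}}$ is injective everywhere on $C$, independently of any global conjugate behavior.

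Combining the two steps, $\pi_{\mathcal{M}}$ is an injective immersion if and only if no geodesic in $\mathcal{M}_0$ carries a conjugate pair, which is the statement of the proposition. The only technical step of substance is the dimension count in Step~1, which relies on the Gauss lemma (giving $N_p(v)\perp v$ so that the relevant Jacobi fields are transverse to $\dot\gamma$) together with nondegeneracy of $\xi^2$; both are automatic on $C\setminus 0$. I do not expect any genuine obstacle, since the argument is already fully laid out in the paragraphs preceding the proposition and only needs to be marshalled into an if-and-only-if statement.
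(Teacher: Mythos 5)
Your proposal is correct and follows essentially the same route as the paper, which proves the proposition through the analysis immediately preceding it: the Jacobi-field dimension count showing that two preimages of a point under $\pi_{\mathcal M}$ force a Jacobi field vanishing at both parameter values (hence conjugate points, and conversely), together with injectivity of $\d\pi_{\mathcal M}$ from the simplicity of zeros of Jacobi fields. Your two-step organization (global injectivity versus immersion) matches the paper's argument, and your converse direction is asserted at the same level of detail as in the text.
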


An indirect indication of the validity of this proposition is the fact that $X^*X$ is a \PDO\ if and only if there are no conjugate points, as mentioned above. The latter however was proved by analyzing the Schwartz kernel of $X^*X$ directly, instead of composing $X^*$ and $X$ as FIOs. In the more difficult case of the restricted X-ray transform ($\mathcal{M}_0$ is a submanifold of $\mathcal{M}$ of the same dimension as $M_0$, when $n\ge3$), the Bolker condition can be violated even if there are no conjugate points, for examples for the Euclidean metric \cite{Greenleaf-Uhlmann}.

We summarize the results so far, most of them due to \cite{Greenleaf-Uhlmann, Guillemin85}, in the following. Let $\mathcal{M}_0$, $M_0$, $C$ be as above.  
We recall that the zero subscript indicates that we work in an open subset of geodesics.

\begin{theorem}
$X$ is a Fourier Integral Operator in the class $I^{-\frac{n}4}( \mathcal{M}_0\times M_0,C')$. It satisfies the Bolker condition if and only if the geodesics in $\mathcal{M}_0$ have no conjugate points. 
In the latter case, $X^*X$ is a \PDO\ of order $-1$ in $M_0$. 
\end{theorem}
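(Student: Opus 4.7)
The three assertions have mostly been prepared in the running discussion, and I would assemble them as follows. For the FIO identification, the Jacobian computation after \r{4.5} shows that the parametric representation of $Z_0$ has differential of maximal rank $2n-1$, so $Z_0$ is smoothly embedded of codimension $n-1$ in $M_0\times\mathcal{M}_0$. The Schwartz kernel $\kappa\,\delta_{Z_0}$ is therefore conormal to $Z_0$, and by the standard conormal calculus \cite[\S18.2]{Hormander3} it lies in $I^{-n/4}(M_0\times\mathcal{M}_0, N^*Z_0)$. Twisting $N^*Z_0$ to $C=N^*Z_0'$ as in \r{4.C} exhibits $X$ as an FIO of the stated order with canonical relation $C$. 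The second assertion --- Bolker iff no conjugate points --- is exactly Proposition~\ref{pr-Bolker}, which was already established by the Jacobi-field argument preceding \r{4.6b}.

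It remains to deduce, under the Bolker condition, that $X^*X$ is a $\Psi$DO of order $-1$. For this I would invoke the clean composition calculus of Duistermaat and Guillemin \cite{DuistermaatG}. Injectivity of $\pi_{\mathcal M}: C\to T^*\mathcal{M}_0$ forces $C^t\circ C$ to coincide with the punctured diagonal $\Delta\subset T^*M_0\times T^*M_0$, while the fiber of $\pi_M: C\to T^*M_0$ over $(x,\xi)$ is the $(n-2)$-dimensional family of geodesics in $\mathcal{M}_0$ through $x$ and conormal to $\xi$ (cf.\ the discussion after \r{4.6a}). Together these verify the clean-composition hypothesis with excess $e=n-2$, and the composition theorem then yields
\[
X^*X \;\in\; I^{-n/4 + (-n/4) + (n-2)/2}(M_0\times M_0,\Delta') \;=\; I^{-1}(M_0\times M_0,\Delta'),
\]
which is the class of $\Psi$DOs of order $-1$ on $M_0$. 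Ellipticity, when $\kappa\not\equiv 0$ on $\xi^\perp$, follows from direct symbol inspection but is not part of the stated claim.

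The technical heart of the argument is the excess-and-order bookkeeping in the last display: one must verify the clean intersection hypothesis in the exact form demanded by the symbol calculus, and track the half-density factors so that the two individual orders $-n/4$ conspire with the excess $(n-2)/2$ to yield the dimension-independent final order $-1$. I expect this to be the main obstacle; the remaining ingredients are quoted verbatim from the preceding discussion.
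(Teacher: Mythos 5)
Your proposal is correct and follows essentially the same route as the paper: the theorem there is presented as a summary of the preceding section (the conormal kernel $\kappa\delta_{Z_0}\in I^{-n/4}$, the Jacobi-field analysis of $\pi_{\mathcal M}$ and $\pi_M$ giving Proposition~\ref{pr-Bolker}), with the final assertion resting, as you do, on the Duistermaat--Guillemin clean composition calculus with excess $n-2$. Your order bookkeeping $-n/4-n/4+(n-2)/2=-1$ is exactly the intended computation.
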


We also recall the result in \cite{SU-caustics} showing that if the conjugate points are of fold type, $X^*X$ has a canonical relation constituting of the following non-intersecting canonical relations: the diagonal (a \PDO\ part) and $N^*\Sigma'$, where $\Sigma$ is the conjugate locus defined as the pairs of all conjugate points, a smooth manifold in that case,  see \cite{Greenleaf-Uhlmann}.

\subsection{The 2-dimensional case} \label{sec_3.2} 
Assume now  $n:= \dim M=2$. In this case, the  three manifolds in the diagram \r{4.2} have the same dimension, $4$. A natural question is whether  $\pi_{\mathcal M}$ and $\pi_M$ are diffeomorphisms, local or global. The analysis of the $n$-dimensional case answers this already but we will make this more explicit below. 

We introduce  the scalar Jacobi fields $a$ and $b$ following  \cite{PestovU04} below. Introduce the notation $v^\perp$, where for a given vector $v$, we define the covector   $v^\perp$ by  $v^\perp_i:= \mathcal{R}_{ij}v^j$, where
\[
\mathcal{R}_{ij} = \sqrt{\det g}\begin{pmatrix} 0&1\\-1&0\end{pmatrix}      .
\]
Note that  $v^\perp$ has the same length as the vector $v$ and is conormal to $v$. The inverse map is then given by  $\xi_\perp^i = \sum_j \mathcal{R}_{ij}^{-1}\xi_j $. If we think of $v\mapsto v^\perp$ as a rotation by $-90$ degrees, then $\xi\mapsto \xi_\perp$ is a rotation by $90$ degrees.

With $H$ as above (now, a curve transversal to $\gamma_0$), every Jacobi vector field along any fixed geodesic $\gamma$ close to $\gamma_0$ is a linear combination $\hat y a(y,\eta,t)+\hat \eta b(y,\eta,t)$, where $(\hat y, \hat \eta)$ are the initial conditions. As before, $(y,\eta)\in BH$. 
The first of the conditions \r{4.6} say that for such a point in $C$, we have $\xi= \lambda \dot\gamma^\perp$ with some $\lambda\not=0$.   
The last two equations imply  $\lambda a = \hat y$, $\lambda b = \hat\eta$ where
\[
a(y,\eta,t) =   \frac{\partial\gamma^i}{\partial y} \dot\gamma_i^\perp , \quad
b(y,\eta,t) =   \frac{\partial\gamma^i}{\partial \eta} \dot\gamma_i^\perp .
\]
The functions $a$ and $b$ are the projections of the Jacobi fields ${\partial\gamma^i}/{\partial y}$ and ${\partial\gamma^i}/{\partial \eta}$ to $\dot\gamma^\perp$. They solve the scalar Jacobi equation 
\[
\ddot a+Ka=0, \quad \ddot b+Kb=0,
 \] 
 where $K$ is the Gauss curvature, with linearly independent initial conditions. 
Then
\be{4.7}
C = \left\{\left(  y,\eta,      \lambda   a(y,\eta,t), \lambda  b(y,\eta,t),    \gamma(y,\eta,t),\lambda\dot\gamma^\perp(y,\eta,t) \right)|\; (y,\eta) \in BH,\; \lambda\not=0, t\in\R  \right\},
\ee
compare with \r{4.2}. 
  Clearly, $\dim C=4$, therefore, all manifolds in the diagram \r{4.2} are of the same dimension, 4.

The Bolker condition which we analyzed  above says that  $\pi_{\mathcal M}$  is an injective immersion and only if there are no conjugate points along  the geodesics in $\mathcal{M}_0$. In particular, this is true near $\gamma_0$ if there are no conjugate points on $\bar\gamma_0$. We will prove this again in this 2D situation. 
Fix $(y,\eta,\hat y,\hat \eta)$ with $(\hat y,\hat\eta)\not=0$. We want to see first if the time $t$ for which
\be{4.8}
\lambda a(y,\eta,t)=\hat y, \quad \lambda b(y,\eta,t) = \hat \eta
\ee
is unique. 
Consider the scalar non-zero Jacobi field $c(y,\eta,t): =  \hat y b(y,\eta,t)-\hat\eta a(y,\eta,t) $ that vanishes when \r{4.8} holds. The problem is reduced to showing uniqueness of the solution to $c(y,\eta,t)=0$ with respect to $t$. If there are two solutions however, then they correspond to conjugate points. This proves the injectivity of the projection $\pi_{\mathcal M}$ in this case. The injectivity of differential of $\pi_{\mathcal M}$ follows from the fact that $\d c/\d t\not=0$ when $c=0$. Since $\dim T^*\mathcal{M}_0=\dim C=4$, we get that $\pi_{\mathcal M}$ is actually a local diffeomorphism. It is global, from $C$ to $\mathcal{V}$,  assuming no conjugate points along any geodesic in $\mathcal{M}_0$.

We show now that $\pi_M:C\to V$ is a  diffeomorphism, see  \r{4.6a}. 
For $(x,\xi)\in N^*\gamma$, for some $\gamma\in\mathcal{M}$, 
let $\gamma_{x,v}$ be the unique, by \r{odd}, geodesic with the unit $v$ so that $v=\pm (\xi/|\xi|)_\perp$. 
Without loss of generality we may assume that  the sign above is positive. 
Let  $y\in H$  be the point where it hits $H$ for the first time, $t$, and let $\eta$ be the projection of the direction at $y$ to $TH$. Then $(y,\eta,t)$ depend smoothly on $(x,\xi)$ as a consequence of the assumption that $\gamma_0$ hits $H$ transversely. Thus the inverse of $\pi_M$ is given by 
\[
\pi_M^{-1}:   (x,\xi)\mapsto (y,\eta,\lambda a(y,\eta,t),\lambda b(y,\eta,t),x,\xi), \quad y=y(x,\xi), \; \eta=\eta(x,\xi), \; t=t(x,\xi) 
\]
 with the last three functions defined as above. So $\pi_M$ is a local diffeomorphism.  
If the opposite to \r{odd} holds, then $\pi_M$ is a double cover.

Combining this with the previous paragraph,  we get in particular that  $\mathcal{C}(x,\xi) = (y,\eta,\lambda a,\lambda b)$ given by $\mathcal{C} = \pi_{\mathcal{M}}\circ  \pi_M^{-1}$ 
is a local diffeomorphism. 

We summarize those results below. Recall that $\mathcal{M}_0$ is an open subset of geodesic and that $M_0\subset M$ consist of the interior point on those geodesics, and that  $X$ is restricted to $\mathcal{M}_0\times M_0$. 

\begin{theorem} \label{thm_2D}
Let $\dim M=2$. Then under the assumptions in this section,

(a)  $X$ is an FIO of order $-1/2$ associated with the canonical relation $C$ given by \r{4.7}, which is a graph of the canonical map $\mathcal{C}$ described above. 

(b) $\mathcal{C}$ is a local diffeomorphism. It is a global one,  from $V$ to $\mathcal V$ if and only if there are no conjugate points on the geodesics in $\mathcal{M}_0$. 

(c)  If there are no conjugate points on the geodesics in $\mathcal{M}_0$, 
$X$ is elliptic at $(x,\xi)$ if and only if $\kappa(x,v)\not=0$ for $v$ such that $v^\perp$ is collinear with $\xi$.
\end{theorem}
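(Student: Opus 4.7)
The plan is to assemble Theorem~\ref{thm_2D} from the pieces already established in the discussion of Section~\ref{sec_3.2}. For part (a), I would specialize the general FIO structure recalled earlier in this section: the Schwartz kernel $\kappa \delta_{Z_0}$ is a conormal distribution in $I^{-n/4}(M_0 \times \mathcal{M}_0, Z_0)$, which with $n=2$ yields order $-1/2$. To obtain the parameterization \r{4.7} of $C = N^*Z_0'$, I would rewrite the conormality conditions \r{4.6} using 2D features: the first forces $\xi = \lambda \dot\gamma^\perp$ for some $\lambda \ne 0$, and the remaining two become $\hat y = \lambda a(y,\eta,t)$ and $\hat \eta = \lambda b(y,\eta,t)$ after projecting the Jacobi fields $\partial\gamma/\partial y$ and $\partial\gamma/\partial\eta$ onto $\dot\gamma^\perp$. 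The claim that $C$ is the graph of a canonical map $\mathcal{C}$ is then immediate once part (b) provides the invertibility of $\pi_M$.

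For part (b), the equality $\dim C = \dim T^*\mathcal{M}_0 = \dim T^*M_0 = 4$ reduces the question to injectivity of the projections and of their differentials. I would examine $\pi_{\mathcal M}$ first: for fixed $(y,\eta,\hat y,\hat \eta)$, the preimages in $C$ correspond to the zeros in $t$ of the scalar Jacobi field $c(t) := \hat y \, b(y,\eta,t) - \hat \eta \, a(y,\eta,t)$, and two distinct zeros of such a field are precisely conjugate points along $\gamma(y,\eta,\cdot)$. Injectivity of $\d\pi_{\mathcal M}$ follows from $\dot c(t) \ne 0$ at any zero of $c$, which is the standard fact that zeros of nontrivial Jacobi fields are simple. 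Hence $\pi_{\mathcal M}$ is always a local diffeomorphism and is global if and only if no conjugate points occur. For $\pi_M$, hypothesis \r{odd} pins down, given $(x,\xi) \in V$, a unique directed geodesic in $\mathcal{M}_0$ with unit tangent $v = (\xi/|\xi|)_\perp$; its first crossing of $H$ yields $(y,\eta,t)$ smoothly in $(x,\xi)$, and $\lambda$ is recovered from $\xi = \lambda \dot\gamma^\perp$. This produces a smooth global inverse of $\pi_M$. Composing, $\mathcal{C} = \pi_{\mathcal M} \circ \pi_M^{-1}$ is a local diffeomorphism, and a global one from $V$ to $\mathcal{V}$ exactly under the no-conjugate-points hypothesis.

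For part (c), the principal symbol of the FIO $X$ on $C$ can be read directly from the Schwartz kernel $\kappa \delta_{Z_0}$: it is proportional to $\kappa(x, \dot\gamma)$ evaluated at the corresponding point of $Z_0$. Under \r{odd}, the fiber $\pi_M^{-1}(x,\xi)$ is a single point whose geodesic has direction $v = (\xi/|\xi|)_\perp$, so $v^\perp$ is collinear with $\xi$; ellipticity at $(x,\xi)$ is therefore equivalent to $\kappa(x, v) \ne 0$ for this $v$. The main obstacle is really bookkeeping rather than substance: once the scalar Jacobi-field parameterization \r{4.7} is in hand and \r{odd} is used to promote $\pi_M$ to a global diffeomorphism, the theorem is essentially a repackaging of the arguments already developed above; care is needed only in verifying that the explicit $\pi_M^{-1}$ is smooth globally, which reduces to smooth dependence of first-hitting times on initial data under the transversality assumption on $H$.
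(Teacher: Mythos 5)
Your proposal is correct and follows essentially the same route as the paper: the scalar Jacobi fields $a,b$, the auxiliary field $c(t)=\hat y\,b-\hat\eta\,a$ whose simple zeros control $\pi_{\mathcal M}$, the explicit inverse of $\pi_M$ via the first hitting time on $H$ under \r{odd}, and the order count $-n/4$ with $n=2$ are exactly the ingredients the authors use in Section~\ref{sec_3.2}. The only cosmetic difference is that you derive the ellipticity criterion in (c) directly from the kernel $\kappa\delta_{Z_0}$, whereas the paper invokes the general criterion (ellipticity iff $\kappa(x,\theta)\neq 0$ for some $\theta$ conormal to $\xi$) and specializes it to the single admissible direction in 2D; the content is the same.
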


Conjugate points destroy the injectivity of $\mathcal{C}$, while a violation of condition \r{odd} (assumed above) 
makes $\mathcal{C}$  $1$-to-$2$. 

Notice also that regardless of existence or not of conjugate points, $X: H^s(M_0)\to H^{s+1/2}(\mathcal{M}_0)$ continuously  because one can use a partition of unity and reduce the problem to the case of no conjugate points, see also \cite{SU-caustics}.  Ellipticity of $X$ is understood in the sense of \cite[\S 25.3]{Hormander4}. 


\section{Cancellation of singularities and instability} \label{sec4}
We are ready now to prove  a stronger version of the cancellation of singularities of $X$ when $\dim M=2$. First, we will prove that we have a cancellation of infinite order. Second, the type of the conjugate locus will play no role at all. 
More precisely, 
since $X$  maps $H^{s-1/2}$ to $H^{s}$ locally, resolution of singularities without loss of derivatives would mean $Xf\in H^s(\gamma_0,\Gamma^0)\Longrightarrow f\in H^{s-1/2}(x_0,\xi^0) $ for any $(\gamma_0,\Gamma^0, x_0,\xi^0)\in C$. We proved in \cite{SU-caustics} that this is not true if the conjugate points are of fold type, and there is an infinite dimensional space of distributions, for which $f\in H^{s-1/2}(x_0,\xi^0)$ but    $Xf\in H^{s+1/4}(\gamma_0,\Gamma^0)$, and this is true actually in open conic neighborhoods of those points. This is a loss of at least of $1/4$ derivatives, even if we can recover $\WF(f)$ in another Sobolev space. We show below that we have actually loss of all derivatives, and the type of the conjugate points does not matter. 

Assume from now on that $\mathcal{M}_0$ is a small neighborhood of some $\gamma_0$. Then the set $V$ has two natural disjoint components, corresponding to the choice of the orientation of the normals to the geodesics. In the representation \r{4.7}, this corresponds to the choice of the sign of $\lambda$. Assume the convention that $\lambda>0$ corresponds to the positive orientation. Then 
\be{5.0}
V=V_-\cup V_+, \quad  \mathcal{V} = \mathcal{V}_-\cup \mathcal{V}_+; \quad \mathcal{V}_\pm := \mathcal{C}(V_\pm). 
\ee

To understand better what $\mathcal{C}(p_1,\xi^1) = \mathcal{C}(p_2,\xi^2)$ means, observe first that the latter is equivalent to the following. The points $p_1$ and $p_2$ belong to the same geodesic $\gamma(y,\eta,t)$, i.e., $p_1=\gamma(t_1,y,\eta)$, $p_2 = \gamma(t_2,y,\eta)$. Next, 
\be{5.1}
\lambda_1 a(t_1,y,\eta)= \lambda_2 a(t_2,y,\eta), \quad \lambda_1 b(t_1,y_0,\eta)= \lambda_2 b(t_2,y_0,\eta),
\ee
and  
\[
\xi^1 = \lambda_1\dot\gamma^\perp(t_1,y,\eta), \quad \xi^2 = \lambda_2\dot\gamma^\perp(t_2,y,\eta).
\]
\begin{figure}[!ht] 
  \centering
  \includegraphics[width = 3.5in,page=2]{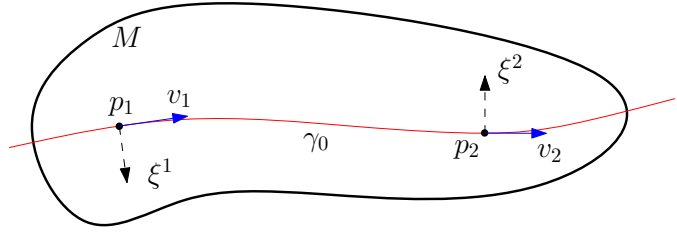}
  \caption{ Singularities that cancel; $p_1$ and $p_2$ are conjugate and there is an  even number of conjugate points  (or none) between $p_1$ and $p_2$. Next,  $(p_2,\xi^2) = \mathcal{C}_{21}(p_1,\xi^1)$, see \r{C21}. }
  \label{pic01}
\end{figure}
In what follows, we drop the dependence on $(y,\eta)$.  
The Wronskian $W(t):= a(t)b'(t)-a'(t)b(t)$ is independent of $t$ and therefore equal to its initial condition $W(0)\not=0$. Consider the Jacobi field $c(t) := a(t_1)b(t)- b(t_1)a(t)$ (as we did above). Then $c(t_1) =c(t_2)=0$. We have
\[
c'(t_1) = W(0), \quad c'(t_2) = (\lambda_2/\lambda_1)W(0). 
\]
Therefore, 
\[
\lambda_1 = \lambda c'(t_1), \quad \lambda_2 = \lambda c'(t_2), \quad \lambda\not=0. 
\]
We therefore proved the following.
\begin{theorem}\label{thm_C}
$\mathcal{C}(p_1,\xi^1) = \mathcal{C}(p_2,\xi^2)$ if and only if there is a geodesic  $[0,1]\to \gamma\in \mathcal{M}_0$ joining  $p_1$ and $p_1$ so that 

(a) $p_1$ and $p_2$ are conjugate to each other, along $\gamma$,

(b) $\xi=\lambda J'(0)$, $\eta=\lambda J'(1)$, $\lambda\not=0$, where $J(t)$ is the unique non-trivial, up to multiplication by a constant, Jacobi field with $J(0)=J(1)=0$. 
\end{theorem}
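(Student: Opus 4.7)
The plan is to unwind $\mathcal{C}(p_1,\xi^1)=\mathcal{C}(p_2,\xi^2)$ using the explicit parametrization \r{4.7} of $C$ and the identity $\mathcal{C}=\pi_{\mathcal M}\circ\pi_M^{-1}$. Writing $(y_i,\eta_i,t_i,\lambda_i):=\pi_M^{-1}(p_i,\xi^i)$, we have $p_i=\gamma(y_i,\eta_i,t_i)$ and $\xi^i=\lambda_i\dot\gamma^\perp(y_i,\eta_i,t_i)$. The assumed equality forces $(y_1,\eta_1)=(y_2,\eta_2)=:(y,\eta)$, so $p_1$ and $p_2$ lie on a common geodesic $\gamma\in\mathcal{M}_0$, and then applying $\pi_{\mathcal M}$ reduces the hypothesis precisely to the two scalar equations \r{5.1}.

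Next I would introduce the scalar Jacobi field
\[
c(t):=a(t_1)b(t)-b(t_1)a(t),
\]
already singled out in the discussion preceding the theorem. By construction $c(t_1)=0$, and the first equation of \r{5.1} multiplied by $b(t_1)$ minus the second multiplied by $a(t_1)$ gives $\lambda_2 c(t_2)=0$, hence $c(t_2)=0$ since $\lambda_2\ne0$. As $c$ is a nontrivial solution of $\ddot c+Kc=0$ vanishing at both $t_1$ and $t_2$, the points $p_1,p_2$ are conjugate along $\gamma$, proving (a). In the two-dimensional setting the space of Jacobi fields vanishing at $t_1$ is one-dimensional, so $c$ is unique up to scalar, which yields the uniqueness assertion on $J$ in (b).

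For the derivative identification in (b), I would exploit the constancy of the Wronskian $W(t):=a(t)b'(t)-a'(t)b(t)\equiv W(0)\ne0$ to compute $c'(t_1)=W(0)$ and, using \r{5.1}, $c'(t_2)=(\lambda_2/\lambda_1)W(0)$. Reparametrizing $s:=(t-t_1)/(t_2-t_1)$ and setting $J(s):=c(t_1+s(t_2-t_1))$ produces a nonzero Jacobi field with $J(0)=J(1)=0$ whose endpoint derivatives are proportional to $\lambda_1$ and $\lambda_2$ by the chain rule. Combining with $\xi^i=\lambda_i\dot\gamma^\perp(t_i)$ and the identification $v\leftrightarrow v^\perp$ from Section~\ref{sec_3.2}, a single nonzero constant $\lambda$ appears so that $\xi^1=\lambda J'(0)$ and $\xi^2=\lambda J'(1)$.

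The converse is essentially a direct check: given $\gamma$ and $J$ as in (a)--(b), recover $(y,\eta)\in BH$ from the intersection of $\gamma$ with $H$, read off $t_1,t_2$ from $p_i=\gamma(y,\eta,t_i)$, and define $\lambda_1,\lambda_2$ from $J'(0),J'(1)$ via the formula above; then \r{5.1} holds, so $\mathcal{C}(p_1,\xi^1)=\mathcal{C}(p_2,\xi^2)$. The principal bookkeeping hurdle I anticipate is collapsing the two \emph{a priori} independent scalars $\lambda_1,\lambda_2$ into the single $\lambda$ required by (b); this is precisely where the Wronskian identity is decisive, since without it one can only pin down the ratio $\lambda_2/\lambda_1$.
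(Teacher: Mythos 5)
Your proposal is correct and follows essentially the same route as the paper: unwind the equality via the parametrization \r{4.7} to arrive at the system \r{5.1}, introduce the scalar Jacobi field $c(t)=a(t_1)b(t)-b(t_1)a(t)$ vanishing at $t_1$ and $t_2$, and use the constancy of the Wronskian to compute $c'(t_1)=W(0)$ and $c'(t_2)=(\lambda_2/\lambda_1)W(0)$, which collapses $\lambda_1,\lambda_2$ into the single constant $\lambda$ of part (b). The only (harmless) additions are your explicit reparametrization to $[0,1]$ and the remark that nontriviality of $c$ follows from $W(0)\not=0$, both of which the paper leaves implicit.
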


Of course, (b) is equivalent to saying that $\xi=\lambda c'(0)\dot\gamma^\perp$, $\eta=\lambda c'(1)\dot\gamma^\perp$, $\lambda\not=0$, where $c(t)$ is the unique non-trivial, up to multiplication by a constant, Jacobi field with $c(0)=c(1)=0$. In particular, $\xi$ and $\eta$ are conormal to $\gamma$. 
This generalizes a result in \cite{SU-caustics} proved there under the assumption that the conjugate points are of fold type.

From now on, we fix $(p_1,\xi^1)$ and $(p_2,\xi^2)$ so that $\mathcal{C}(p_1,\xi^1) = \mathcal{C}(p_2,\xi^2)$. 
Let $V^1=V_-^1\cup V_+^1$ be a small conic neighborhood of $(p_1,\xi^1)$, and  let $V^2=V_-^2\cup V_+^2$ be a small conic neighborhood of $(p_2,\xi^2)$.  Note that the signs of $c'(0)$ and $c'(1)$ in Theorem~\ref{thm_C} above are the opposite if the number $m$ of conjugate points between $p$ and $q$ is even (or zero);   and the signs are equal  otherwise. Let $\epsilon=(-1)^m$.  
By shrinking those neighborhoods a bit, we can assume that   $  \mathcal{C}(V^1_\pm) = \mathcal{C}(V^2_{\mp\epsilon})=  :\mathcal{V}_\pm$. Let $\mathcal{C}_k:= \mathcal{C}|_{V^k}$, $k=1,2$, where, somewhat incorrectly, $\pm(-1)^m= \pm$, if $m$ is even, and $\pm(-1)^m= \mp$. 
 Set
\be{C21}
\mathcal{C}_{21}:= \mathcal{C}_2^{-1}\circ \mathcal{C}_1 : V^1\to V^2. 
\ee
Then $\mathcal{C}_{21}$ is a canonical relation itself, and is a diffeomorphism. Also,  $\mathcal{C}_{21}: V_\pm^1\to V_{\mp  \eps}^2$.

Next theorem extends the corresponding result in \cite{SU-caustics} from the case of fold conjugate points (in any dimension) to any type of conjugate points (in two dimensions). It relates the canonical relation $\mathcal{C}_{21}$ directly to the geometry of the conjugate locus. 

\begin{theorem}\label{C12}\ 

(a) $\mathcal{C}_{21}: V^1\to V^2$ is a diffeomorphism.

(b) $\mathcal{C}_{21} : (p_1,\xi^1)\to (p_2,\xi^2)$ also admits the  characterization of Theorem~\ref{thm_C}. 

\end{theorem}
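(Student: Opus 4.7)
The plan is to observe that both statements follow almost immediately from results already established in Section~\ref{sec3}, together with the characterization in Theorem~\ref{thm_C}.

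For part (a), I would argue that $\mathcal{C}_{21}$ is a composition of diffeomorphisms. By Theorem~\ref{thm_2D}(b), the canonical map $\mathcal{C}$ is a local diffeomorphism, so each of $(p_1,\xi^1)$ and $(p_2,\xi^2)$ has a conic neighborhood on which $\mathcal{C}$ restricts to a diffeomorphism onto its image. The neighborhoods $V^1$ and $V^2$ were already chosen in the paragraph preceding the statement to be small enough that $\mathcal{C}_1:=\mathcal{C}|_{V^1}$ and $\mathcal{C}_2:=\mathcal{C}|_{V^2}$ are both diffeomorphisms onto the \emph{same} open set $\mathcal{V}_+\cup\mathcal{V}_-$ (with the sign correspondence governed by the parity $\epsilon=(-1)^m$). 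Hence $\mathcal{C}_{21}=\mathcal{C}_2^{-1}\circ\mathcal{C}_1$ is a composition of diffeomorphisms and therefore a diffeomorphism $V^1\to V^2$.

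For part (b), I would unravel the definition. By construction, $\mathcal{C}_{21}(q_1,\zeta^1)=(q_2,\zeta^2)$ if and only if $\mathcal{C}_1(q_1,\zeta^1)=\mathcal{C}_2(q_2,\zeta^2)$; that is, $\mathcal{C}(q_1,\zeta^1)=\mathcal{C}(q_2,\zeta^2)$ with $(q_1,\zeta^1)\in V^1$ and $(q_2,\zeta^2)\in V^2$. Theorem~\ref{thm_C} translates this equality directly into the stated geometric characterization: there exists a geodesic $\gamma\in\mathcal{M}_0$ with $\gamma(0)=q_1$, $\gamma(1)=q_2$ such that $q_1$ and $q_2$ are conjugate along $\gamma$, and $\zeta^1=\lambda J'(0)$, $\zeta^2=\lambda J'(1)$ for some $\lambda\ne 0$, with $J$ the unique non-trivial Jacobi field along $\gamma$ vanishing at both endpoints. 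This applies at the distinguished pair $(p_1,\xi^1)\mapsto(p_2,\xi^2)$ and, because $V^1$ and $V^2$ are open conic neighborhoods, to every pair on the graph of $\mathcal{C}_{21}$.

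The only minor point to check is uniformity: that the joining geodesic, the Jacobi field $J$, and the scalar $\lambda$ depend smoothly on $(q_1,\zeta^1)$ on all of $V^1$ and remain in the regime where Theorem~\ref{thm_C} applies. This is a smooth dependence argument for geodesics and Jacobi fields on initial data, and part (a) guarantees that the image pair $(q_2,\zeta^2)$ stays in $V^2$ after possibly shrinking $V^1$. I do not anticipate a substantive obstacle; the content of the theorem is essentially a repackaging of Theorems~\ref{thm_2D} and~\ref{thm_C} into a statement about $\mathcal{C}_{21}$, which is the object needed for the subsequent microlocal analysis of cancellation of singularities.
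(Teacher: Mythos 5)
Your proposal is correct and follows essentially the same route as the paper, which treats the theorem as an immediate consequence of the construction: $\mathcal{C}_1$ and $\mathcal{C}_2$ are restrictions of the local diffeomorphism $\mathcal{C}$ to suitably shrunk conic neighborhoods with common image $\mathcal{V}$, so $\mathcal{C}_{21}=\mathcal{C}_2^{-1}\circ\mathcal{C}_1$ is a diffeomorphism, and unwinding $\mathcal{C}_{21}(p_1,\xi^1)=(p_2,\xi^2)$ into $\mathcal{C}(p_1,\xi^1)=\mathcal{C}(p_2,\xi^2)$ hands the characterization directly to Theorem~\ref{thm_C}. Your added remark on smooth dependence is a harmless elaboration the paper does not bother to spell out.
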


The following theorem describes the microlocal kernel of $X$ in this setup. 

\begin{theorem}\label{thm_cancel} 
Let $\kappa(p_2,\xi^2)\not=0$. Then  there exists an FIO $F_{21}$ of order zero with canonical relation $\mathcal{C}_{21}$ with the following property. Let  $f_k\in \mathcal{E}'(M_0)$ with $\WF(f_k)\subset V^k$, $k=1,2$, with $V_{1}$, $V^2$ small enough.  Then 
\be{5.1'}
X(f_1+f_2)\in H^s(\mathcal{V})
\ee
 if and only if 
 \be{5.1a}
 f_2+ F_{21}f_1\in H^{s-1/2}(V^2).
 \ee
  The FIO $F_{21}$ is elliptic if and only if $\kappa(p_1,\xi^1)\not=0$. 
\end{theorem}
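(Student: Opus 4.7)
\medskip
\noindent\textbf{Proof proposal.} The plan is to build $F_{21}$ by inverting $X$ microlocally on one branch of the canonical relation and composing with $X$ restricted to the other branch. Concretely, choose microlocal cutoffs $\Psi_1, \Psi_2$ supported in $V^1, V^2$ respectively, and set $X_k := X\Psi_k$. By Theorem~\ref{thm_2D}(a), each $X_k$ is an FIO of order $-1/2$ with canonical relation $\mathcal{C}_k := \mathcal{C}|_{V^k}$, which by Theorem~\ref{thm_2D}(b) is a local canonical diffeomorphism onto $\mathcal{V}$. Shrinking $V^1, V^2$ if necessary, we may assume $\mathcal{C}_1(V^1) = \mathcal{C}_2(V^2) = \mathcal{V}$ and that $\mathcal{C}_k$ is a bijection from $V^k$ to $\mathcal{V}$. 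By Theorem~\ref{thm_2D}(c), $X_2$ is elliptic on $V^2$ under the standing assumption $\kappa(p_2,\xi^2)\neq 0$.

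Next, I would apply the standard FIO parametrix construction (as in H\"ormander, Vol.~IV, \S25.3) to build $Q_2$, an FIO of order $+1/2$ associated with $\mathcal{C}_2^{-1}$, satisfying
\begin{equation*}
Q_2 X_2 = \Psi_2 + R_2, \qquad X_2 Q_2 = \Psi_{\mathcal V} + R'_2,
\end{equation*}
where $\Psi_{\mathcal V}$ is a microlocal cutoff to $\mathcal V$ and $R_2, R'_2$ are smoothing on the relevant sets. Now define
\begin{equation*}
F_{21} := Q_2 X_1.
\end{equation*}
As a composition of FIOs whose canonical relations compose transversally (each is a local canonical graph), $F_{21}$ is an FIO of order $0$ with canonical relation $\mathcal{C}_2^{-1}\circ \mathcal{C}_1 = \mathcal{C}_{21}$, as defined in \r{C21}. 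Its principal symbol at $(p_1,\xi^1)$ is, up to a nonvanishing geometric factor coming from $\mathcal{C}_2^{-1}$, proportional to the principal symbol of $X_1$ at $(p_1,\xi^1)$, which by Theorem~\ref{thm_2D}(c) is a nonzero multiple of $\kappa(p_1,\xi^1)$. Hence $F_{21}$ is elliptic at $(p_1,\xi^1)$ iff $\kappa(p_1,\xi^1)\neq 0$.

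For the equivalence \r{5.1'} $\Leftrightarrow$ \r{5.1a}, write
\begin{equation*}
X(f_1 + f_2) = X_1 f_1 + X_2 f_2 \pmod{C^\infty}
\end{equation*}
(using that $\WF(f_k)\subset V^k$ and $\Psi_k$ is the identity there). Applying $Q_2$ gives
\begin{equation*}
Q_2 X(f_1+f_2) = F_{21} f_1 + f_2 \pmod{C^\infty},
\end{equation*}
using $Q_2 X_2 f_2 = f_2$ modulo smooth. Since $Q_2$ has order $+1/2$ and is associated with a canonical graph, it is microlocally bounded $H^s(\mathcal V)\to H^{s-1/2}(V^2)$; this shows \r{5.1'} $\Rightarrow$ \r{5.1a}. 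Conversely, applying $X_2$ to \r{5.1a} and using $X_2 Q_2 = \Psi_{\mathcal V}$ modulo smoothing yields $X(f_1+f_2) = X_2(F_{21}f_1 + f_2)$ modulo smoothing, and $X_2$ has order $-1/2$, so $f_2 + F_{21}f_1 \in H^{s-1/2}(V^2)$ gives $X(f_1+f_2)\in H^s(\mathcal V)$.

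The main obstacle I anticipate is bookkeeping of the microlocal cutoffs: the operators $\Psi_1, \Psi_2, \Psi_{\mathcal V}$ must be arranged compatibly so that the parametrix identities for $X_2$ absorb $f_2$ exactly, and so that the leftover pieces from $X(1-\Psi_k)f_k$ are smoothing on $V^k$. This is routine but requires choosing $V^1, V^2$ small enough that $\mathcal{C}_1, \mathcal{C}_2$ are genuine diffeomorphisms onto a common target, which is exactly what Theorem~\ref{C12} provides. All other steps are standard FIO calculus once this is in place.
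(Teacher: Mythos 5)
Your proposal is correct and follows essentially the same route as the paper: decompose $Xf = X_1f_1 + X_2f_2$ microlocally, observe that each $X_k$ is an elliptic FIO of order $-1/2$ associated with the canonical graph $\mathcal{C}_k$, and apply a parametrix of $X_2$ to obtain $F_{21} = X_2^{-1}X_1$ (the paper's displayed formula in the proof transposes the indices, but the discussion following Theorem~\ref{thm_cancel} confirms that $F_{21}=X_2^{-1}X_1$ is the intended definition, matching your $Q_2X_1$). Your more careful bookkeeping of the cutoffs and of the two implications simply fills in details the paper leaves implicit.
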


Clearly, under the ellipticity assumptions above, we can swap the indices $1$ and $2$ to obtain
\[
X(f_1+f_2)\in H^s(\mathcal{V}) \quad \Longleftrightarrow\quad   f_2 + F_{21}f_1\in H^{s-1/2}(V^2)\quad  \Longleftrightarrow\quad  f_1+ F_{12}f_2\in H^{s-1/2}(V^1),
\]
where $F_{12}= F_{21}^{-1}$ (microlocally). 
\begin{proof}
Let $X_k$ be $X$, restricted to distributions with wave front sets in $V_k$, $k=1,2$. Then $Xf = X_1f_1+X_2f_2$. We proved above that $X_k$ are FIOs with canonical relations $\mathcal{C}_k$, $k=1,2$; elliptic, if  $\kappa(p_k,\xi^k)\not=0$. Then an application of the parametrix $X_2^{-1}$ to $Xf$ completes the proof. In particular, we get 
\[
F_{12} = X_2^{-1}X_1, \quad F_{21} = X_1^{-1}X_2.
\]
\end{proof}

The theorem implies  that we cannot resolve the singularities from the singularities of $Xf$ near $\gamma_0$. 

\begin{corollary}\label{cor_cancel} 
Given $f_1\in \mathcal{E}'(M_0)$ with $\WF(f_1)\subset V^1$, there exists $f_2\in \mathcal{E}'(M_0)$ with $\WF(f_2)\subset V^2$ so that $X(f_1+f_2)\in C^\infty(\mathcal{V})$. 
\end{corollary}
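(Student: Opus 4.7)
The plan is to use Theorem~\ref{thm_cancel} essentially as a parametrix identity, by constructing $f_2$ so that the combination $f_2 + F_{21}f_1$ is smooth; then the equivalence in \r{5.1'}--\r{5.1a} forces $X(f_1+f_2)\in C^\infty(\mathcal{V})$ by letting $s\to\infty$.

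Concretely, given $f_1\in\mathcal{E}'(M_0)$ with $\WF(f_1)\subset V^1$, I would first form $F_{21}f_1$, where $F_{21}$ is the order-zero FIO from Theorem~\ref{thm_cancel}. By the canonical-relation calculus and the fact that $\mathcal{C}_{21}:V^1\to V^2$ is a diffeomorphism (Theorem~\ref{C12}), the distribution $F_{21}f_1$ has wave front set contained in $V^2$. Next, since $F_{21}f_1$ need not be compactly supported (only its singular support is controlled), I would pick a cutoff $\chi\in C_c^\infty(M_0)$ that equals $1$ on a neighborhood of the projection of $V^2$ onto $M_0$ (i.e., near $p_2$), and define
\begin{equation*}
f_2 := -\chi\, F_{21}f_1.
\end{equation*}
Then $f_2\in\mathcal{E}'(M_0)$, $\WF(f_2)\subset V^2$, and
\begin{equation*}
f_2 + F_{21}f_1 = (1-\chi)F_{21}f_1 \in C^\infty(M_0),
\end{equation*}
because $(1-\chi)$ vanishes in a neighborhood of the singular support of $F_{21}f_1$ inside $V^2$. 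In particular $f_2+F_{21}f_1\in H^{s-1/2}(V^2)$ for every $s\in\R$.

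Applying Theorem~\ref{thm_cancel} in the direction \r{5.1a}$\Rightarrow$\r{5.1'} for arbitrarily large $s$, we conclude that $X(f_1+f_2)\in H^s(\mathcal{V})$ for every $s$, hence $X(f_1+f_2)\in C^\infty(\mathcal{V})$, which is the desired statement.

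I do not expect any real obstacle: the construction is a one-line parametrix argument once Theorem~\ref{thm_cancel} is available. The only minor technical point is the compact-support adjustment via $\chi$, which is standard and exploits the fact that the wave front set of $F_{21}f_1$ is localized to a conic neighborhood of $(p_2,\xi^2)$, so that multiplication by a cutoff equal to $1$ near $p_2$ changes $F_{21}f_1$ only by a $C^\infty$ term. One should also ensure that $V^1$ is small enough (and hence $\WF(f_1)$ narrow enough) that $\mathcal{C}_{21}(V^1)\subset V^2$ strictly, which is exactly the shrinking of neighborhoods carried out before the definition \r{C21}.
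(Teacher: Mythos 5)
Your construction $f_2:=-\chi\,F_{21}f_1$ (so that $f_2+F_{21}f_1$ is smooth, hence in $H^{s-1/2}(V^2)$ for every $s$, whence $X(f_1+f_2)\in H^s(\mathcal{V})$ for every $s$) is exactly the intended argument: the paper states the corollary as an immediate consequence of Theorem~\ref{thm_cancel}, i.e.\ take $f_2=-F_{21}f_1$ modulo the standard compact-support cutoff. Your proof is correct and follows the same route; the remarks about $\WF(F_{21}f_1)\subset V^2$ via the canonical relation and about shrinking $V^1$ are the right technical points to flag.
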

In other words, we can cancel any singularities close to $(p_1,\xi^1)$ with a suitable chosen ``annihilator'' with a wave front set near $(p_2,\xi^2)$. 

\begin{remark}
The results so far can be easily extended to $f=f_1+\dots+f_m$, where $f_j$ are microlocally supported near $N^*\gamma$ over neighborhoods of $p_1, \dots, p_m$, conjugate to each other. Then $Xf$ is microlocally equivalent to $X_1f_1+\dots + X_m f_m$, with all $X_j$ elliptic FIOs associated to canonical graphs of diffeomorphisms, if $\kappa\not=0$. Given $f_1, \dots,f_{m}$ with the exception of $f_i$ with $i$ fixed, one can find $f_i$ which annihilates all singularities by simply inverting $X_i$. 
\end{remark}

So far, we assumed that we know $Xf$ locally, and in that in particular \r{odd} holds. If $\kappa(x,\theta)$ is an even function of $\theta$, replacing $\gamma(t)$ by $\gamma(-t)$ provides the same information. 
We can formulate the following global results. Part (a) of the theorem below is  essentially known, see, e.g., \cite{SU-JFA,SU-AJM}.

\begin{theorem}\label{thm_main1} 
Let $(M_1,g)$ be a  Riemannian manifold   and let $M\Subset M_1$ be compact submanifold with boundary.  Assume that all geodesics having a common point with $M$ exits $M$ at both ends (i.e., $M$ is non-trapping). Let $\mathcal{M}_0$ be an open set of geodesics so that 
\be{5.1c}
\cup_{\gamma\in\mathcal{M}_0}N^*\gamma\supset T^*M.
\ee
Then

(a) If there are no conjugate points on any maximal geodesic in $M_1$, then there exists $s_1$ so that
\be{5.1cc}
\|f\|_{H^{s-1/2}(M_1)} \le C_s\|Xf\|_{H^s(\mathcal{M}_0)} + C_s \|f\|_{H^s(M_1)}, \quad \forall s\ge1/2, \quad \forall f\in C^\infty(M).
\ee
In particular, if $X$ is injective on $C_0(M)$, then then we have
\be{5.1c2}
\|f\|_{H^{s-1/2}(M_1)} \le C\|Xf\|_{H^{s}(\mathcal{M})}.
\ee

(b) Let $\kappa(x,\theta) = \kappa(x,-\theta)$. If at least one geodesic in $M_1$ has conjugate points in the interior of $M$, then the following estimate does not hold, regardless of the choice of $s_1$, $s_2$, $s_3$:
\be{5.2}
\|f\|_{H^{s_1}(M_1)} \le C\|Xf\|_{H^{s_2}(\mathcal{M}_0)} + C\|f\|_{H^{s_3}({M_1})}.
\ee
\end{theorem}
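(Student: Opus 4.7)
Since no maximal geodesic in $M_1$ has conjugate points, neither does any geodesic in $\mathcal{M}_0$, so Proposition~\ref{pr-Bolker} gives the Bolker condition and the standard Guillemin/Duistermaat--Guillemin clean-intersection calculus makes $X^*X$ a classical pseudo-differential operator of order $-1$ on $M$. Combined with $\kappa \neq 0$ (implicit) and the covering hypothesis \r{5.1c}, Theorem~\ref{thm_2D}(c) upgrades this to ellipticity of $X^*X$ on all of $T^*M \setminus 0$. A parametrix $Q$ of order $+1$ with $QX^*X = I + R$ ($R$ smoothing), together with the $H^s \to H^{s+1/2}$ continuity of $X^*$, yields
\[
\|f\|_{H^{s-1/2}(M)} \le C\|X^*Xf\|_{H^{s+1/2}(M)} + C\|Rf\|_{H^{s-1/2}(M)} \le C\|Xf\|_{H^s(\mathcal{M}_0)} + C\|f\|_{H^s(M)},
\]
which, after extending $f$ from $M$ to $M_1$, is \r{5.1cc}. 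Estimate \r{5.1c2} under injectivity then follows from the standard Rellich-compactness trick: a sequence $\{f_n\}$ with $\|f_n\|_{H^{s-1/2}} = 1$, $\|Xf_n\|_{H^s} \to 0$ would, by \r{5.1cc} and compact embedding, subconverge in $H^s$ to a nonzero element of $\Ker X$, a contradiction.

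\textbf{Part (b).} The plan is to convert the local cancellation in Corollary~\ref{cor_cancel} into a global counterexample. Let $\gamma \subset M_1$ carry a conjugate pair $p_1, p_2 \in \Mint$; by \r{5.1c} and two-dimensionality---the geodesic through $p_1$ conormal to a given $\xi^1 \in N^*\gamma|_{p_1}$ is unique up to reversal---we have $\gamma \in \mathcal{M}_0$. Put $(p_2, \xi^2) := \mathcal{C}_{21}(p_1, \xi^1)$; by Theorem~\ref{thm_cancel} there is an elliptic zero-order FIO $F_{21}$ associated with the canonical diffeomorphism $\mathcal{C}_{21}\colon V^1 \to V^2$ such that $R := X \circ (I - F_{21})$ is smoothing on distributions microsupported in $V^1$. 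Without loss of generality $s_1 > s_3$ (otherwise \r{5.2} is a triviality by Sobolev embedding). Choose compactly supported $\{f_1^{(n)}\} \subset \mathcal{E}'(\Mint)$ with $\WF(f_1^{(n)}) \subset V^1$, $\|f_1^{(n)}\|_{H^{s_3}(M_1)} = 1$ and $\|f_1^{(n)}\|_{H^{s_1}(M_1)} \to \infty$ (e.g., mollifications of conormal distributions at $p_1$, conormal to $\xi^1$, of increasing Lagrangian order, microlocalized to $V^1$ and renormalized in $H^{s_3}$). Set $f^{(n)} := f_1^{(n)} - F_{21}f_1^{(n)}$. Then $\|f^{(n)}\|_{H^{s_3}} \le C$ by the Sobolev boundedness of $F_{21}$; the smoothing property of $R$ gives $\|Xf^{(n)}\|_{H^{s_2}(\mathcal{M}_0)} = \|Rf_1^{(n)}\|_{H^{s_2}} \le C\|f_1^{(n)}\|_{H^{s_3}} \le C$; and with a smooth cutoff $\chi$ equal to $1$ near $p_1$ and $0$ near $p_2$, the operator $\chi F_{21}$ is smoothing on $V^1$-microsupported distributions (because $\chi$ vanishes on the spatial projection of $\mathcal{C}_{21}(V^1)$), so $\|\chi F_{21}f_1^{(n)}\|_{H^{s_1}} \le C\|f_1^{(n)}\|_{H^{s_3}} \le C$. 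Since $\chi f^{(n)} = f_1^{(n)} - \chi F_{21}f_1^{(n)}$, the reverse triangle inequality and Sobolev continuity of multiplication by $\chi$ give $\|f^{(n)}\|_{H^{s_1}} \ge c\|\chi f^{(n)}\|_{H^{s_1}} \ge c\|f_1^{(n)}\|_{H^{s_1}} - C \to \infty$, contradicting \r{5.2}. The evenness of $\kappa$ enters precisely to ensure $Xf(\gamma) = Xf(\gamma^{-1})$, so that the $\mathcal{M}_0$-data contains no directional refinement beyond the setup of Corollary~\ref{cor_cancel}.

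\textbf{Main obstacle.} The delicate step is upgrading the \emph{microlocal} cancellation of Theorem~\ref{thm_cancel} (which a priori only asserts smoothness of $X(f_1 + f_2)$ near $\mathcal{V}$) to the \emph{global} smoothing bound $\|Rf_1^{(n)}\|_{H^{s_2}(\mathcal{M}_0)} \le C\|f_1^{(n)}\|_{H^{s_3}}$. This rests on two companion facts: first, $\WF(Xf_k) \subset \mathcal{C}_k(V^k) \subset \mathcal{V}$ by shrinking $V^1, V^2$, so the cancellation there leaves $\WF(Rf_1^{(n)})$ empty on all of $T^*\mathcal{M}_0$ and a smoothing continuity estimate follows; second, the parallel spatial statement that $\chi F_{21}$ is genuinely smoothing (not just bounded) on $V^1$-microsupported distributions, which is what extracts the crucial lower bound $\|f^{(n)}\|_{H^{s_1}} \gtrsim \|f_1^{(n)}\|_{H^{s_1}}$ and closes the contradiction.
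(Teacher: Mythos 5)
Your argument is correct in substance but takes a somewhat different route from the paper on both halves. For (a), the paper does not pass through the normal operator: it uses that $X$ itself is an elliptic FIO of order $-1/2$ associated with a canonical graph (Theorem~\ref{thm_2D}), builds a microlocal parametrix of $X$ near each $(x,\xi)$ using the covering geodesic supplied by \r{5.1c}, and sums with a pseudodifferential partition of unity; your parametrix for the elliptic \PDO\ $X^*X$ of order $-1$, combined with the $H^s\to H^{s+1/2}$ continuity of $X^*$, delivers the same estimate and the two devices are interchangeable here. The passage from \r{5.1cc} to \r{5.1c2} is identical in both (the paper cites Taylor, Proposition~5.3.1, which is exactly your compactness argument). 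For (b), the paper's proof is a one-line reduction --- ``follows from Theorem~\ref{thm_cancel} and \cite{SU-JFA}'' --- i.e.\ it invokes the abstract functional-analytic lemma of \cite{SU-JFA} asserting that a nontrivial microlocal kernel defeats every estimate of type \r{5.2}; you instead unwind that lemma into an explicit unbounded family $f^{(n)}=(\Id-F_{21})f_1^{(n)}$, which is more self-contained and makes the mechanism visible. Your two side remarks are also the right ones: evenness of $\kappa$ is needed so that the reversed geodesics carry no extra information (otherwise Section~\ref{sec5a} shows the conclusion can fail), and restricting to $s_1>s_3$ is the correct reading since \r{5.2} is vacuous otherwise. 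The one place to tighten --- and it is precisely what the cited lemma in \cite{SU-JFA} packages --- is the uniformity in $n$ of the bounds $\|X(\Id-F_{21})f_1^{(n)}\|_{H^{s_2}}\le C$ and $\|\chi F_{21}f_1^{(n)}\|_{H^{s_1}}\le C$: the statement ``$Rf$ is smooth for each $f$ microsupported in $V^1$'' does not by itself yield an operator bound over a family whose smooth seminorms may degenerate. The clean fix is to realize the family as $f_1^{(n)}=\Psi g_n$ with $\Psi$ a single fixed \PDO\ cutoff essentially supported in $V^1$ and elliptic at $(p_1,\xi^1)$, and $(g_n)$ bounded in $H^{s_3}$ with $\|\Psi g_n\|_{H^{s_1}}\to\infty$; then $X(\Id-F_{21})\Psi$ and $\chi F_{21}\Psi$ are fixed operators with smooth, compactly supported Schwartz kernels, and the uniform bounds are automatic.
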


\begin{remark}
Here, we define the Sobolev spaces on $\mathcal{M}$ via a fixed coordinate atlas. An invariant definition is also possible, for example by identification of $\mathcal{M}$ and a subset of the unit ball bundle $BM_1$, where $M_1\Supset M$ as above. 
\end{remark}

\begin{remark}
We do not assume convexity here. If a geodesic $\gamma$ has a few disconnected segments in $M$, (b) is still true since we regard the integral $Xf(\gamma)$ as one number. On the other hand, if we know the integral over each segment, we may be able to resolve the singularities, if (a) applies to each one, and then (b) may fail. 
\end{remark}

 \begin{remark}
Condition \r{5.1c} says that for each $(x,\xi)$ there is at least one (directed) geodesic through $x$ normal to $\xi$. This, of course, means that if we ignore the direction, all geodesics through $M$ must be in $\mathcal{M}_0$. The direction however matters because $\kappa$ is not necessarily  even w.r.t. $\theta$. In (c), condition \r{5.1c} simply means that $\mathcal{M}$ must contain all geodesics through $M$. 
 \end{remark}
  \begin{remark}
It is easy to prove that under the assumptions in (b), even the weaker conditional type of estimate of the type \r{5.2}, with $\|Xf\|_{H^{s_2}(\mathcal{M}_0)}$ replaced by $\|Xf\|^\alpha_{H^{s_2}(\mathcal{M}_0)}  \|f\|^\beta_{H^{s_4}(M_1)} $, $\alpha+\beta=1$, does not hold. 
 \end{remark}

\begin{proof}

 Under the assumptions of (a), $X$ is an elliptic FIO of order $-1/2$ associated to the canonical graph $C$. Given $(x,\xi)\in T^*M\setminus 0$, restrict $X$ to a small neighborhood of the geodesic normal to $\xi$ at $x$, which existence is guaranteed by \r{5.1c}. Then $\mathcal{C}$ is a diffeomorphism, by Theorem~\ref{thm_2D}. 
 It therefore has a parametrix, and \r{5.1cc} follows for $Pf$ on the left, where $P$ is a \PDO\ of order $0$ essentially supported neat $(x,\xi)$. Using a partition of unity, we prove \r{5.1cc}.     Estimate \r{5.1c2} follows from \r{5.1c} and a functional analysis argument, see, e.g., \cite[Proposition~5.3.1]{Taylor-book0}.
 
Part (b) follows from Theorem~\ref{thm_cancel} and \cite{SU-JFA}, for example. 
\end{proof}

We note that estimate \r{5.2} can be microlcalized with pseudo-differential cutoffs in an obvious way; and the same conclusion remains. 

\medskip 
\textbf{Properties of $X^*X$}.  The operator $X^*$ is often used as a first step of an inversion of $X$, and it was the main object of study in \cite{SU-caustics}. We show the connection next between the theorems above and the properties of $X^*X$. 

With the microlocalization above, $X=X_1+X_2$. Then
\[
X^*X = X_1^*X_1 + X_1^*X_2+X_2^*X_1+X_2^*X_2.
\]
The operators above are FIOs 
with canonical relations with the following mapping properties: $X_1^*X_1$ and $X_2^*X_2$ are \PDO s acting in $V^1$ and $V^2$, respectively, $X_1^*X_2$ maps $V^2$ to $V^1$ microlocally, and its adjoint $X_2^*X_1$ maps $V^1$ to $V^2$ microlocally. If $\kappa(p_2,\xi^2)\not=0$, then $X_2^*X_2$ is elliptic if $V^2$ is small enough, and for any \PDO\ $P_2$ essentially supported in $V^2$, and for $f=f_1+f_2$ as above, we have
\[
 (X_2^* X_2)^{-1}  P_2 X^*X f = f_2 + (X_2^* X_2)^{-1}  P_2  X_2^*X_1f_2,
\]
where the inverses are parametrices, and the equality is modulo smoothing operators applied to $f$. Then we get \r{5.1a} again with 
\[
F_{21}= (X_2^* X_2)^{-1}  P_2  X_2^*X_1.
\]
This is natural --- we obtained $F_{21}= X_2^{-1}X_1$ in Theorem~\ref{thm_cancel}; and one way to construct $X_2^{-1}$ is to apply $X_2^*$ first, and then to apply $(X_2^* X_2)^{-1}$. In particular, we get the following generalization of one of the main results in \cite{SU-caustics} to the 2D case.

\begin{theorem}\label{thm_xstar}
With the notation and the assumptions above, 
\[
\begin{split}
X^*Xf &= A_1\left( f_1 +  F_{12}f_2\right) \quad \text{microlocally in $V^1$},\\
X^*Xf &= A_2\left( f_2 +  F_{21}f_1\right) \quad \text{microlocally in $V^2$},
\end{split}
\]
where $A_1=X_1^*X_1$ and $A_2=X_2^*X_2$ are \PDO s with principal symbols 
\be{symb1}
\frac\pi{|\xi|}\left( |\kappa_j(x,\xi_\perp/|\xi|)|^2  +   |\kappa_j(x,-\xi_\perp/|\xi|)|^2        \right),
\ee
 $j=1,2$, respectively, and $F_{21}$, $F_{12}$ are the FIOs of Theorem~\ref{thm_cancel}. 
\end{theorem}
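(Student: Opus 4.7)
The plan is to decompose $X$ microlocally as $X = X_1 + X_2$ (following the notation of Theorem~\ref{thm_cancel}), expand $X^*X$ into four pieces, and identify the microlocal structure of each. First I would write
\[
X^*X = X_1^*X_1 + X_2^*X_2 + X_1^*X_2 + X_2^*X_1.
\]

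For the diagonal terms, by Theorem~\ref{thm_2D} the canonical relation $\mathcal{C}$ restricted to $V^k$ is a local canonical diffeomorphism, so $X_k$ is an FIO associated with a canonical graph and $X_k^*X_k =: A_k$ is a \PDO\ of order $-1$ on $V^k$ (elliptic wherever $\kappa(p_k,\xi^k)\neq 0$). For the cross terms, the composition of canonical relations gives that $X_j^*X_k$ has canonical relation $\mathcal{C}_j^{-1}\circ \mathcal{C}_k = \mathcal{C}_{jk}$, a canonical diffeomorphism $V^k \to V^j$ by Theorem~\ref{C12}; cleanness is automatic because all relations are graphs.

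Given $f = f_1 + f_2$ with $\WF(f_k) \subset V^k$, both $X_2^*X_2 f_2$ and $X_2^*X_1 f_1$ have wave fronts in $V^2$, so they vanish modulo $C^\infty$ in $V^1$. Hence microlocally in $V^1$, $X^*Xf \equiv A_1 f_1 + X_1^*X_2 f_2$. To rewrite this in the stated form I would invoke the identity $F_{12} = X_1^{-1}X_2$ from the proof of Theorem~\ref{thm_cancel}, together with the elliptic canonical-graph parametrix formula $X_1^{-1} \equiv A_1^{-1}X_1^*$ modulo smoothing; this yields $A_1 F_{12} \equiv X_1^*X_2$ and therefore $X^*Xf \equiv A_1(f_1 + F_{12}f_2)$ in $V^1$. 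The formula in $V^2$ follows by interchanging indices.

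It remains to compute the principal symbol of $A_k$. Writing the Schwartz kernel of $X^*X$ as a double integral over geodesics through $x$ and arclength along them, then parametrizing directions by $\theta = \pm \xi_\perp/|\xi|$ near the relevant microlocal patch and carrying out stationary phase in the tangential variable, produces the factor $\pi/|\xi|$ and the two terms $|\kappa_k(x,\pm\xi_\perp/|\xi|)|^2$ corresponding to the two admissible orientations, giving \r{symb1}. The main obstacle I foresee is justifying the microlocal parametrix identity $X_k^{-1} \equiv A_k^{-1}X_k^*$ rigorously (i.e., that the elliptic canonical-graph FIO $X_k$ admits a parametrix of this specific form modulo smoothing) and carefully tracking the half-density contributions in the stationary-phase computation to match the constant $\pi$ in \r{symb1}.
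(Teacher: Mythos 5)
Your proposal is correct and follows essentially the same route as the paper: the paper likewise decomposes $X^*X$ into the four terms $X_j^*X_k$, identifies the diagonal terms as \PDO s and the cross terms as FIOs with canonical relations $\mathcal{C}_{12}$, $\mathcal{C}_{21}$, and recovers $F_{21}=(X_2^*X_2)^{-1}X_2^*X_1$ via exactly the parametrix identity $X_2^{-1}\equiv A_2^{-1}X_2^*$ you invoke. The only cosmetic difference is that the paper does not rederive the principal symbol \r{symb1} by stationary phase but takes it as the known symbol of the normal operator from earlier work (e.g.\ \cite{FSU, SU-AJM}), whereas you propose to compute it directly; both are fine.
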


Note that under assumption \r{odd}, one of the two summands above vanishes. 

This theorem explains the artifacts we will get when using $X^*Xf$ (times an elliptic \PDO) as an attempt to recover $f$. Assume that $f_2=0$ and we want to recover $f_1$ from $Xf_1$, and assume that $\kappa\not=0$. We will get the original $f_1$ (in $V^1$) plus the ``artifact'' $F_{21}f_1$ (in $V^2$). This is not a downside of that particular method;  by Theorem~\ref{thm_cancel}, we cannot say which is the original. The true image might be either $f_1$ or $F_{21}f_1$, or some convex linear combination of the two. All we can recover is $f_2+ F_{21}f_1$, or, equivalently, $f_1+F_{12}f_2$.

\section{Recovery of singularities and stability for certain non-even weights. The attenuated transform}\label{sec5a}
\subsection{Heuristic arguments}
The analysis so far was concerned with whether we can recover $\WF(f)$ near $N^*\gamma_0$ 
from the knowledge of the singularities of $Xf$ known near $\gamma_0$. We allowed for the geodesic set $\mathcal{M}_0$ to be not necessarily small but we assumed \r{odd}. On the other hand, each $(x,\xi)\in T^*M\setminus 0$ can be possibly detected by the two directed geodesics (and the ones close to them) through $x$ normal to $\theta$. If $Xf$ is known there, we have two equations for each such $(x,\xi)$. Let us say that we are in the situation in the previous section. Instead of the single equation \r{5.1'}, we have two equations, each one coming from each direction. We think of this as a $2\times 2$ system, and if its determinant is not zero, loosely speaking, we could solve it and still recover the singularities. 

In the situation in the previous section, let $\mathcal{C}(p_1,\xi^1) = \mathcal{C}(p_2,\xi^2)$. Let 
 $(p_1,v_1)$ and $(p_2,v_2)$, respectively be the corresponding unit tangent vectors, pointing in the same direction along the geodesic connecting $p_1$ and $p_2$. 
Without loss of generality, we may assume that $\xi^1/|\xi^1|=v_1^\perp$ (the other option is to be $-v_1^\perp$). Then $ \eps \xi^2/|\xi^2|=v_2^\perp$, where $\eps=(-1)^m$, as above, where $m$ is the number of conjugate points between $p_1$ and $p_2$.

Let $X_+$ and $X_-$ be $X$ restricted to a neighborhood of $\gamma_0$ with a positive and a negative orientation, respectively. 
Then 
\[
X_+(f_1+f_2)\in C^\infty, \quad X_-(f_1+f_2)\in C^\infty
\]
is a $2\times 2$ system. Heuristically, $(p_1,v_1)$ is involved there with weight $\kappa(p_1,v_1)$ in the first equation, and with weight $\kappa (p_1,-v_1)$ in the second one; similarly for $(p_2,v_2)$. Therefore, if 
\be{R1}
\det
\begin{pmatrix}
 \kappa(p_1,v_1)   &    \kappa(p_2,v_2)          \\
 \kappa(p_1,-v_1)   & \kappa(p_2,-v_2) 
  \end{pmatrix}\not=0,
\ee
the system should be solvable and we should be able to recover the singularities at those points.

One such case is the attenuated geodesic X-ray transform with a positive attenuation. The weight $\kappa$ then decreases strictly along the geodesic flow. Then $\kappa(p_1,v_1)> \kappa(p_2,v_2)$, and  $\kappa(p_2,-v_2)< \kappa(p_2,-v_2)$, and then the determinant is positive, therefore not zero. 

Finally, if we want to resolve three singularities placed at three conjugate to each other points $p_1$, $p_2$, $p_2$, we would have two equations for three unknowns, and recovery would not be possible. 

\subsection{Recovery of singularities for non-even weights} 

\begin{theorem}\label{thm5.1}
Let $f=f_1+f_2$ with $\WF(f_k)\subset V^k$, $k=1,2$. 
 If $\kappa\not=0$,  \r{R1} holds, 
  and $V^1$ and $V^2$ are small enough, then $X(f_1+f_2)\in H^s(\mathcal{V})$ implies $f_k\in H^{s-1/2}(V^k)$, $k=1,2$.

\end{theorem}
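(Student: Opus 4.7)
The plan is to apply Theorem~\ref{thm_cancel} separately to the positive and negative geodesic orientations and then combine the two resulting microlocal equations into a $2\times 2$ system whose determinant is the one in \r{R1}.

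Split the geodesics near $\gamma_0$ into $\mathcal M_\pm$ according to orientation, so that each piece satisfies \r{odd}, and let $X_\pm$ denote $X$ restricted to $\mathcal M_\pm$. The hypothesis $X(f_1+f_2)\in H^s(\mathcal V)$ is equivalent to the simultaneous membership $X_\pm(f_1+f_2)\in H^s(\mathcal V_\pm)$. The conjugate pair $(p_1,\xi^1),(p_2,\xi^2)$ is the same for both signs, so the canonical relation $\mathcal C_{21}$ furnished by Theorem~\ref{thm_cancel} is common to both orientations. Applying that theorem to each sign yields FIOs $F_{21}^\pm$ of order zero with common canonical relation $\mathcal C_{21}$ and equivalences
\[
X_\pm(f_1+f_2)\in H^s \ \Longleftrightarrow\ f_2+F_{21}^\pm f_1\in H^{s-1/2}(V^2).
\]
Subtracting gives $(F_{21}^+-F_{21}^-)f_1\in H^{s-1/2}(V^2)$.

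The key step is to show that $F_{21}^+-F_{21}^-$ is elliptic precisely under \r{R1}. Both summands are order-zero FIOs with canonical relation $\mathcal C_{21}$ (a graph of a local diffeomorphism), so their difference is such an FIO as well, and ellipticity is a statement about principal symbols at the point in question. Using the factorization $F_{21}^\pm=(A_2^\pm)^{-1}(X_2^*X_1)^\pm$ from the proof of Theorem~\ref{thm_xstar} together with the symbol formula \r{symb1} (in which, under \r{odd}, only one summand contributes for each orientation), a direct computation shows that the principal symbols of $F_{21}^\pm$ at $(p_1,\xi^1)\mapsto(p_2,\xi^2)$ equal, up to a common non-vanishing geometric factor $c$ depending only on the Jacobi data along the connecting geodesic,
\[
c\,\frac{\kappa(p_1,v_1)}{\kappa(p_2,v_2)} \quad\text{and}\quad c\,\frac{\kappa(p_1,-v_1)}{\kappa(p_2,-v_2)},
\]
respectively. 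Their difference has principal symbol proportional to
\[
\frac{\kappa(p_1,v_1)\kappa(p_2,-v_2)-\kappa(p_1,-v_1)\kappa(p_2,v_2)}{\kappa(p_2,v_2)\,\kappa(p_2,-v_2)},
\]
whose numerator is exactly the determinant in \r{R1}. Under \r{R1} the symbol is non-zero, so $F_{21}^+-F_{21}^-$ is elliptic. A parametrix applied to the subtracted equation gives $f_1\in H^{s-1/2}(V^1)$, and then substituting back yields $f_2=(f_2+F_{21}^+f_1)-F_{21}^+f_1\in H^{s-1/2}(V^2)$.

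The main obstacle is the symbol calculation for $F_{21}^\pm$: one must confirm that reversing the geodesic parameterization preserves the canonical relation $\mathcal C_{21}$, the Maslov factor, and the half-density amplitude (all of which depend only on Jacobi-field data invariant under the orientation flip, via Theorem~\ref{thm_C}), so that the values $\kappa(p_j,\pm v_j)$ are the only orientation-sensitive inputs to the principal symbol. Granted this, the argument reduces exactly to the heuristic $2\times 2$ system at the beginning of Section~\ref{sec5a}, and the conclusion of the theorem follows.
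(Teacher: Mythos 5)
Your proposal is correct and follows essentially the same route as the paper: both form the two microlocal equations from the two orientations, eliminate $f_2$ by inverting the elliptic factors $X_{\pm,2}$, and reduce the matter to the ellipticity, under \r{R1}, of the operator $F_{21}^+-F_{21}^-$ (the paper's $P= X_{+,2}^{-1}X_{+,1}-X_{-,2}^{-1}X_{-,1}$). The only difference is presentational: the paper verifies ellipticity by factoring $P$ as an elliptic FIO times $\Id-Q$ and computing the principal symbol of the \PDO\ $Q$ via Egorov's theorem, whereas you compute the symbols of the two FIOs directly and subtract; the resulting determinant condition is identical.
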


\begin{proof} We assume first that $\WF(f_1)\subset V^1_+$; then $\WF(f_2)\subset V^2_{-\eps}$. The case $\WF(f_1)\subset V^1_-$ is similar. All inverses below are parametrices and all equalities hold modulo $C^\infty$ in either $\mathcal{V}$ or $V^1_-$ or $ V^2_{-\eps}$, depending on the context.  
With the notation of the previous section, let $X_{+,k}$ be $X_+$ restricted to a neighborhood  of $p_k$, $k=1,2$, and we similarly define $X_{-,k}$. Then \r{5.1'} gives us two equations
\be{R2}
X_{+,1}f_1+ X_{+,2}f_2= g_+:=X_+f \in H^s(\mathcal{V}), \quad X_{-,1}f_1+ X_{-,2}f_2= g_-:= X_-f\in H^s(\mathcal{V})
 \ee 
Since both $\kappa(p_2,v_2)$ and $\kappa(p_2,-v_2)$ are non-zero by assumption, $X_{\pm,2}$ are elliptic, and we have
\[
P f_1 = X_{+,2}^{-1}g_+ - X_{-,2}^{-1}g_-, \quad P:= X_{+,2}^{-1}X_{+,1}-X_{-,2}^{-1} X_{-,1}.
\]
We have
\be{X}
P =    X_{+,2}^{-1}X_{+,1}( \Id -Q), \quad Q:= X_{+,1}^{-1} X_{+,2} X_{-,2}^{-1} X_{-,1}  .
\ee
The operator $ X_{+,2} X_{-,2}^{-1}$ is a \PDO\ on $\mathcal{M}_0$. To compute its principal symbol, write 
\be{XX}
 X_{+,2} X_{-,2}^{-1} =X_{+,2}\left( X_{-,2}^{-1} X_{+,2}\right)X_{+,2}^{-1}.
 \ee
  The term $ X_{-,2}^{-1} X_{+,2}$ in the parentheses in \r{XX}  is a \PDO\ on $M$. 
  Then the principal symbol of $ X_{-,2}^{-1} X_{+,2}$ is $\sigma_2:= \kappa(x,\eps \xi_\perp/|\xi|)/\kappa(x,-\eps \xi_\perp/|\xi|)  $, as it follows from 
 Theorem~\ref{thm_xstar},  its obvious generalization to operators with different weights, and the construction of $X_{-,2}^{-1}$ by applying $X_{-,2}^*$ to $X_{-,2}$  first. Recall that $ X_{-,2}^{-1} X_{+,2}$ has essential support near $(p_2,\pm\xi^2)$. Then by \r{XX} and by Egorov's theorem, the principal symbol of  $X_{+,2} X_{-,2}^{-1}$ is $\sigma_2\circ \mathcal{C}_2^{-1}$.   In a similar way, by  Theorem~\ref{thm_xstar},  the principal symbol of $X_{-,1} X_{+,1}^{-1} $ is $\sigma_1 =  \kappa(x,-\xi_\perp/|\xi|)/\allowbreak\kappa(x,\xi_\perp/|\xi|) $.
 
 We can now write, see \r{X},\
\[
Q =  X_{-,1}^{-1} \left(X_{-,1} X_{+,1}^{-1} \right)  \left(X_{+,2} X_{-,2}^{-1}\right) X_{-,1}  .
\]
The operator in the first parentheses, $X_{-,1} X_{+,1}^{-1} $, has principal symbol $\sigma_1\circ \mathcal{C}_1^{-1}$.  Applying Egorov's theorem again, we get that the principal symbol of $Q$ is
\[
\left[\left(\sigma_1\circ \mathcal{C}_1^{-1}\right)  \left( \sigma_2\circ \mathcal{C}_2^{-1}\right)\right]\circ \mathcal{C}_1 = \sigma_1 \left( \sigma_2\circ \mathcal{C}_{21}\right). 
\]
Since $\mathcal{C}_{21}(p_1,\xi^1)= (p_2,\xi^2)$, at $(p_1,\xi^1)$, $Q$ has principal symbol 
\[
\sigma_p(Q)(p_1,\xi^1) =  \frac{\kappa(p_1,-\xi^1_\perp/|\xi^1|))    }{\kappa(p_1,\xi^1_\perp/|\xi^1|} 
\frac{\kappa(p_2,\eps \xi^2_\perp/|\xi^2|))    }{\kappa(p_2,-\eps\xi^2_\perp/|\xi^2|} 
=   \frac{\kappa(p_1,-v_1) \kappa(p_2,v_2) }{ \kappa(p_1,v_1) \kappa(p_2,-v_2)}.
\]
Then $\Id-Q$ in \r{5.2} is elliptic, and $P$ is a composition of an elliptic FIO of oder zero associated with $\mathcal{C}_{21}$ and an elliptic \PDO\ (recall that all operators are microlocalized in small enough conic sets)   of order $0$ if $\sigma_p(Q)(p_1,\xi^1)\not=1$. 
\end{proof}

\begin{remark}
We actually proved that $f=f_1+f_2$ can be recovered microlocally in $V^1\cup V^2$   by
\be{par}
\begin{split}
f_1 &=\left( X_{+,2}^{-1}X_{+,1}-X_{-,2}^{-1} X_{-,1}\right)^{-1} \left(X_{+,2}^{-1}g_+ - X_{-,2}^{-1}g_-\right),\\
f_2 &=\left( X_{+,1}^{-1}X_{+,2}-X_{-,1}^{-1} X_{-,2}\right)^{-1} \left(X_{+,1}^{-1}g_+ - X_{-,1}^{-1}g_-\right),
\end{split}
\ee
given $g_\pm = X_\pm f$. As above, all inverses are microlocal parametrices. In terms of $Q$, this could be written as
\be{par2}
f_1 =\left( \Id-Q\right)^{-1}  X_{+,1}^{-1} \left(g_+ -    X_{+,2}  X_{-,2}^{-1}g_-\right),
\ee
with a similar formula for $f_2$ (which can be also reconstructed by solving the first or the second equation on \r{R2} for $f_2$). 
\end{remark}

\subsection{The attenuated X-ray transform} An important example satisfying \r{R1} is the attenuated X-ray transform. It as a weighted transform with weight
\be{P1}
\kappa (x,v) = e^{-\int_0^{\infty}\sigma(\gamma_{x,v}(s), \dot \gamma_{x,v}(s)) \, \d s},
\ee
where $\sigma(x,v)\ge0$ is the attenuation. 
The weight increases along the geodesic flow. More precisely, if $G$ is the generator of the geodesic flow, we have
\[
G\log \kappa = \sigma\ge0. 
\]
If $\sigma>0$ along $\gamma_{x,v}$, then $\kappa$ is strictly increasing. Then \r{R1} is trivially satisfied. In fact, for a fixed $(p_1,v_1,p_2,v_2)$, \r{R1} is equivalent to requiring that $\sigma>0$ for at least one point on the geodesic through these points, between $p_1$ and $p_2$. 
 We then get the following.

\begin{theorem}
Let  $(M_1,g)$ and $M$ be as in Theorem~\ref{thm_main1}. 
  Assume that    $C^\infty\ni\sigma>0$ in $M$ and let $X$ be  the attenuated X-ray transform related to $\sigma$. 

(a) If there are at most two conjugate points along each geodesic through $K$, then $X$ is stable, i.e.,  the conclusions of Theorem~\ref{thm_main1}(a) hold.

(b) If there is a geodesic through $K$ with three (or more) conjugate points, then there is no stability, i.e., the conclusion of Theorem~\ref{thm_main1}(b)  holds. 
\end{theorem}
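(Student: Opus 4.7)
My plan is to treat (a) and (b) separately. For (a), I would verify the determinant condition \r{R1} of Theorem~\ref{thm5.1} for every conjugate pair along geodesics in $\mathcal{M}_0$, then globalize the resulting microlocal recovery by a partition of unity. For (b), I would exploit a dimension count: three pairwise conjugate points in 2D produce a $2\times 3$ microlocal system relating the singularities of $f$ to those of $X_\pm f$, which necessarily has a nontrivial kernel, giving a sequence of singular distributions violating any stability estimate.

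\textbf{Part (a).} Fix a conjugate pair $p_1,p_2$ on a geodesic of $\mathcal{M}_0$, with $v_1,v_2$ the forward unit tangents. Positivity of $\sigma$ in $M$ and the identity $G\log\kappa=\sigma$ imply that $\log\kappa$ is strictly increasing along the forward flow whenever it passes through $M$. Since the segment of the geodesic near $p_1$ lies in $M$ and carries $\sigma>0$, one obtains $\kappa(p_2,v_2)>\kappa(p_1,v_1)$, and applying the same argument to the reversed geodesic gives $\kappa(p_1,-v_1)>\kappa(p_2,-v_2)$. The determinant in \r{R1} is then strictly negative, hence nonzero. With \r{R1} in hand and the hypothesis of at most two conjugate points per geodesic, for every $(x,\xi)\in T^*M\setminus 0$ there is either a conjugate-point-free geodesic through $x$ normal to $\xi$ (along which $X$ is microlocally an elliptic FIO of order $-1/2$ associated to a canonical graph by Theorem~\ref{thm_2D}) or one with a single conjugate pair (where Theorem~\ref{thm5.1} supplies a microlocal parametrix via \r{par}). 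Assembling these microlocal inversions with a partition of unity on $T^*M$, in the manner of Theorem~\ref{thm_main1}(a), gives the global estimate \r{5.1cc}.

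\textbf{Part (b).} Let $\gamma$ be a geodesic with three conjugate points at times $t_1<t_2<t_3$. In 2D the Jacobi equation is the scalar ODE $\ddot c+Kc=0$, so any non-trivial solution vanishing at two of the $t_k$ agrees up to scaling with the solution vanishing at any other pair; hence $p_k:=\gamma(t_k)$ are pairwise conjugate, and by Theorem~\ref{thm_C} the lifts $(p_k,\xi^k)$, with $\xi^k$ suitably normalized conormals along $\gamma$, project to a single element under each of $\mathcal{C}_\pm$. I would then fix small conic neighborhoods $V^k\ni(p_k,\xi^k)$ and write $X_{\pm,k}$ for the corresponding microlocalizations of $X_\pm$. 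The cancellation requirement $X(f_1+f_2+f_3)\in C^\infty$ amounts, modulo $C^\infty$, to
\[
\sum_{j=1}^3 X_{+,j}f_j\equiv 0,\qquad \sum_{j=1}^3 X_{-,j}f_j\equiv 0.
\]
Given any singular $f_3\in\mathcal{E}'(V^3)$, this becomes a $2\times 2$ system for $(f_1,f_2)$ with right-hand side $(-X_{+,3}f_3,-X_{-,3}f_3)$. Since the pair $(p_1,p_2)$ still satisfies \r{R1}, part (a) supplies microlocal parametrices producing $(f_1,f_2)$ with Sobolev regularity comparable to that of $f_3$. Choosing $f_3^{(n)}$ as a sequence of wave packets microlocalized near $(p_3,\xi^3)$ with $\|f_3^{(n)}\|_{H^{s_1}}\to\infty$ while $\|f_3^{(n)}\|_{H^{s_3}}$ stays bounded yields $f^{(n)}=f_1^{(n)}+f_2^{(n)}+f_3^{(n)}$ with $Xf^{(n)}\in C^\infty$, violating \r{5.2}.

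The main obstacle I anticipate is the 2D observation in (b) that three conjugate points are automatically pairwise conjugate: without this, the three lifts $(p_k,\xi^k)$ might not project to a common point under $\mathcal{C}_\pm$ and the $2\times 3$ picture would collapse. A secondary difficulty is the bookkeeping of Sobolev regularities, both in (a), where the partition-of-unity globalization requires the parametrix of Theorem~\ref{thm5.1} to consist of order-zero FIOs with controlled microlocal supports, and in (b), where one must verify $\WF(f_j^{(n)})\subset V^j$ for $j=1,2$ and $\|f_j^{(n)}\|_{H^{s_1}}\lesssim \|f_3^{(n)}\|_{H^{s_1}}$ throughout the construction.
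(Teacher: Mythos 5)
Your proposal is correct and follows essentially the same route as the paper: part (a) verifies the determinant condition \r{R1} from the strict monotonicity of $\kappa$ along the geodesic flow and then invokes the microlocal parametrix \r{par} of Theorem~\ref{thm5.1} together with a partition of unity, while part (b) treats the three mutually conjugate points as a $2\times 3$ microlocal system in which one unknown can be prescribed arbitrarily and the other two solved for. The only additions beyond the paper's argument are your explicit (and correct) remark that in 2D three points conjugate to a common point along a geodesic are automatically pairwise conjugate, and the explicit sequence violating \r{5.2}, which the paper delegates to \cite{SU-JFA}.
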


\begin{proof}
Part (a) follows directly from \r{par} and a partition of unity on $S^*M$, as in the proof of Theorem~\ref{thm_main1}.

If there are three or more points on $\gamma_0$ conjugate to each other, recovery of singularities is impossible. Indeed, let those points and the corresponding unit tangent vectors are $(p_k,v_k)$, $k=1,\dots,N$, $N\ge3$. We define the canonical relations $\mathcal{C}_{kl}= \mathcal{C}_k^{-1}\circ \mathcal{C}_l$ as before, and let $V^k$ be conic neighborhoods of the conormals at $p_k$. Then $\mathcal{C}_k(V^k)=\mathcal{V}$ for all $k$. Let $f_k$ have wave front sets in $V^k$. Given any $N-2$ of them, for example $f_3,\dots f_N$, we can solve
\[
\begin{split}
X_{+,1}f_1+ X_{+,2}f_2+ X_{+,3}f_3+\dots&=g_+,\\
X_{-,1}f_1+ X_{-,2}f_2+ X_{-,3}f_3+\dots&=g_-
\end{split}
\]
for $f_1$ and $f_2$ microlocally. Therefore, we cannot recover the singularities and if, in particular,   $X(f_1+\dots+f_N)\in C^\infty(\mathcal{V})$, the last $N-2$ distributions can have arbitrary singularities on $V^k$, $k=3,\dots,N$. This implies (b), see, e.g., \cite{SU-JFA}. 

\end{proof}

\section{Numerical Experiments}\label{sec5}
\subsection{Cancellation of singularities} 
The first numerical experiment aims to illustrate Theorem~\ref{thm_cancel}. We choose $f_1$ and construct $f_2$ so that $f_1-f_2$ has fewer singularities than $f_1$. 

We use the code developed in \texttt{Matlab} by the first author, whose detail may be found in \cite{Monard2013}. The manifold $M$ is chosen to be the unit disk while the smaller neighborhood $U_2$ where the ``artifacts'' are expected, 
  is the disk of center $(0,0.5)$ and radius $0.5$ (both domains are displayed at the left of Fig. \ref{pic1}). We pick the (isotropic) metric from \cite{Monard2013}, taking the scalar expression
\begin{align}
    g(x,y) = \exp \left( k\exp \left( -\frac{x^2+y^2}{2\sigma^2} \right) \right), \quad \text{with } \quad \sigma = 0.25, \quad k= 1.2.
    \label{eq:lensmetric}
\end{align}
The manifold $(M,g)$ is not simple while the manifold $(M_2,g)$ is. We choose $f_1$ to be a Gaussian well concentrated near a single point, and we view this as an approximation of a delta function. The thick-marks in Figure~\ref{pic3}, left, show the mesh chosen on the circle. The discretization of the initial directions is not visualized. The X-ray transform $Xf_1$ is supported on the ingoing boundary of $M$ and is parameterized in so-called ``fan-beam'' coordinates $(\beta,\alpha)\in [0,2\pi]\times[-\frac{\pi}{2},\frac{\pi}{2}]$, where $\beta$ locates a the initial (boundary) point $\gamma(0) = \binom{\cos\beta}{\sin\beta}$ of the geodesic and $\alpha$ denotes the argument of its (unit) speed with respect to the inner normal, i.e. $\dot\gamma(0) = g^{-\frac{1}{2}} (\gamma(0)) \binom{\cos(\beta+\pi+\alpha)}{\sin(\beta+\pi+\alpha)}$. 
The X-ray transform of a delta function is a delta function on a certain curve in $\mathcal{M}$. Its conjugate locus is above its center, with two folds connected in  a cusp, see the second plot in  Figure~\ref{pic4}. The artifacts in the reconstruction of the delta should be supported above the conjugate locus and conormal to the fold part of it. 

\begin{figure}[!ht] 
  \centering
  \includegraphics[trim = 60 40 60 20, clip, width = \textwidth]{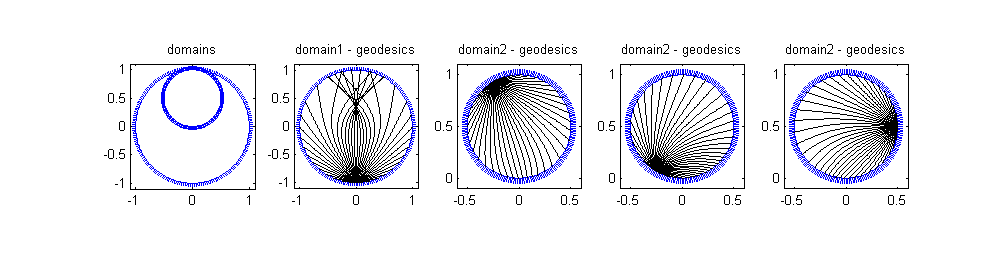}
  \caption{The computational domain and a few family of geodesics. The large disk is $M$ labeled as \texttt{domain1}, the smaller one is $U_2$, called \texttt{domain2}. $(M_1,g)$ has circular symmetry, so the geodesics from any other boundary point look similar on the second picture. $(U_2,g)$ has no symmetry, though it is simple (notice axes have changed on the three rightmost pictures, to fit the size of {\tt domain2}).} 
  \label{pic1}
\end{figure}
  
\begin{figure}[!ht] 
  \centering
  \includegraphics[height=0.22\textheight]{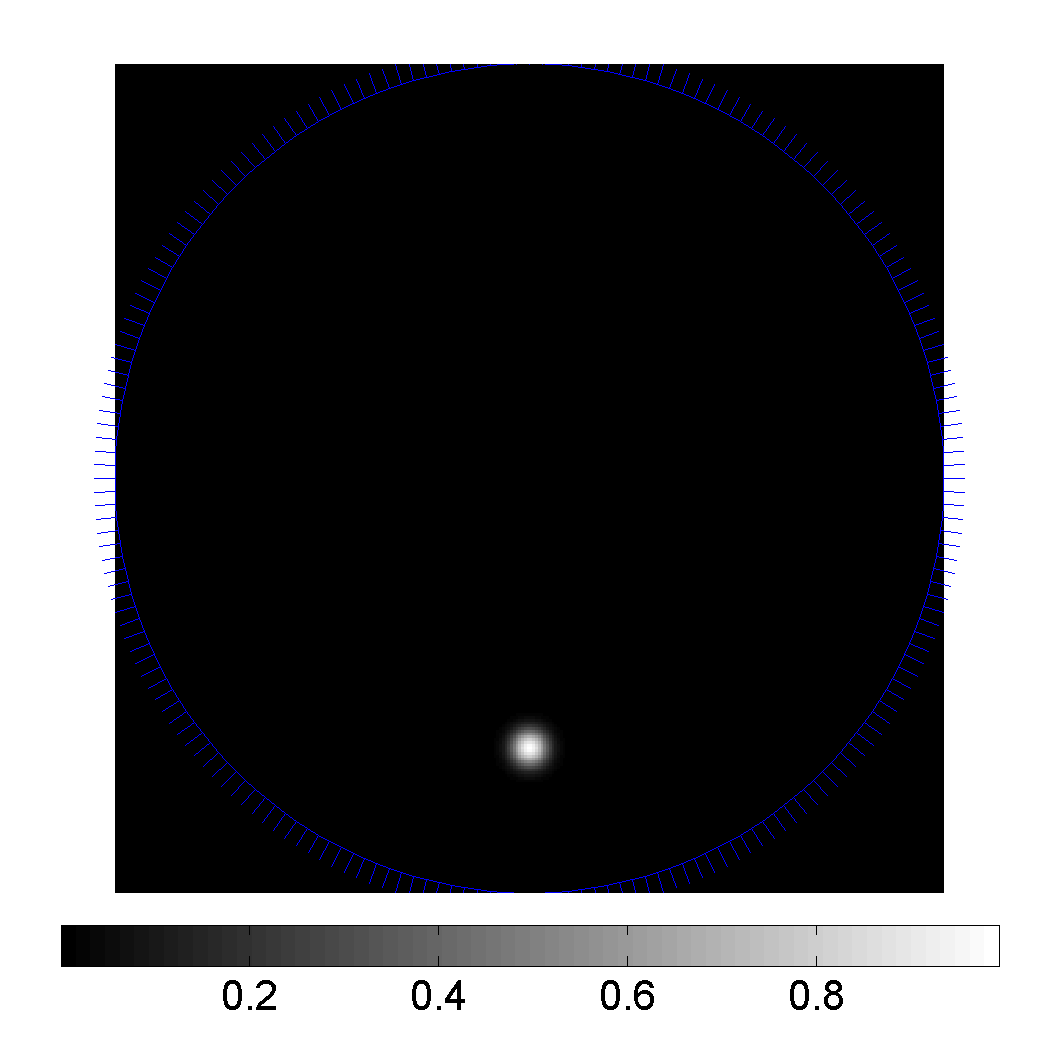}\ \ \ \ 
  \includegraphics[height=0.22\textheight]{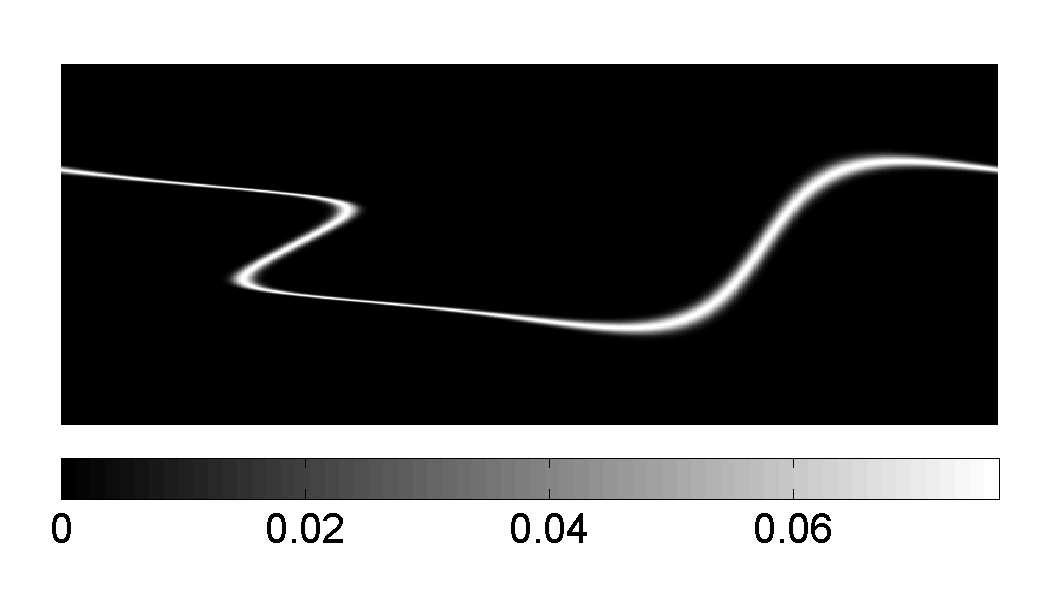}
\caption{The function $f_1$ (left) and $Xf_1$ (right). Observe on $Xf_1$ that when the metric is non-simple, the X-ray transform of a delta function can no longer be expressed as a one-to-one $\alpha(\beta)$ function.}  
  \label{pic3}
\end{figure}

The goal next is to compute $F_{21}f_1=X_2^{-1}X_1f_1$ microlocally, i.e., to construct a function  with a wave front set as that of $F_{21}f_1$ near the conjugate locus. 
We compute $Xf_1$ first in $M$ and then we remap the data from $M$ to $U_2$ via free geodesic transport. 
The so remapped data does not fill the whole $\partial_+SU_2$ and 
 may not belong to the range of $X$ on $U_2$ there. Still, a microlocal inversion is possible. On the remapped data, we apply the reconstruction formula on $U_2$ derived in \cite{PestovU04} and implemented in \cite{Monard2013}, and call the reconstructed quantity $ f_2$. This is equivalent, microlocally, to computing $X_1f$ first and and applying $X_2^{-1}$ to that, i.e., the result is $ f_2= X_2^{-1}X_1f_1$ on some conic open set $V^2$ as above. 
 Numerically, computing $X_2^{-1}X_1f_1$ is based on the following. We can reduce the problem $X_2f_2= X_1f_1$ (microlocally) 
 to solving  the Fredholm-type equation of the form
\[ f_2 + W^2 f_2 = AX_1 f_1, \]
where $W^2$ is an operator of order $-1$ when the metric is simple, whose Schwartz kernel is expressed in terms of the Jacobi fields $(a,b)$ as  section~\ref{sec_3.2}, but with Cauchy conditions on the boundary $(1,0)$ and $(0,1)$, respectively, and $A$ is an explicit approximate reconstruction formula. It is proved in \cite{Krishnan2010} that $W^2$ is a contraction when the metric has curvature close enough to constant, though numerics in \cite{Monard2013} indicate that considering $W^2$ a contraction and inverting the above equation via a Neumann series successfully reconstructs a function from its ray transform in all simple metrics considered. Once $f_2$ is constructed using this approach, we subtract it from $f_1$ (Figure~\ref{pic4}, left), then compute the forward data $X(f_1-f_2)$ on the large domain (see Fig.~\ref{pic5}, left, where some singularities of $Xf_1$ have been canceled). The function/distribution $f=f_1-f_2$, plotted in Figure~\ref{pic4}, is then  the one with  canceled singularities, by Theorem~\ref{thm_cancel}.  
Figure~\ref{pic5} illustrates the cancellations. Of course, only some  open conic set of the singularities is canceled, corresponding to geodesics having conjugate points in $M$. In fact, it is clear from Figure~\ref{pic4} that the cancellation occurs near two directed vertical geodesics corresponding to s small strip around the horizontal medium in Figure~\ref{pic5}.


\begin{figure}[!ht] 
  \centering
  \includegraphics[height=0.24\textheight]{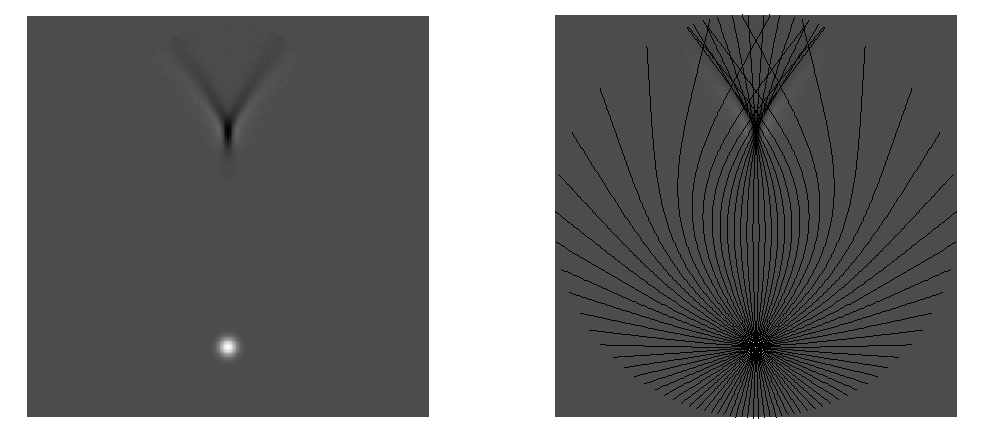}
\caption{The function $f=f_1-f_2$ (left) and the same function with a few super\-imposed geodesics on it (right). The ``artifact'' $f_2$ appears as an approximate conormal distribution to the conjugate locus of the blob that $f_1$ represents. The gray scale has changed, and black now represents negative values, around $-0.5$.}  
  \label{pic4}
\end{figure}
 
\begin{figure}[!ht] 
  \centering
  \includegraphics[width=0.48\textwidth]{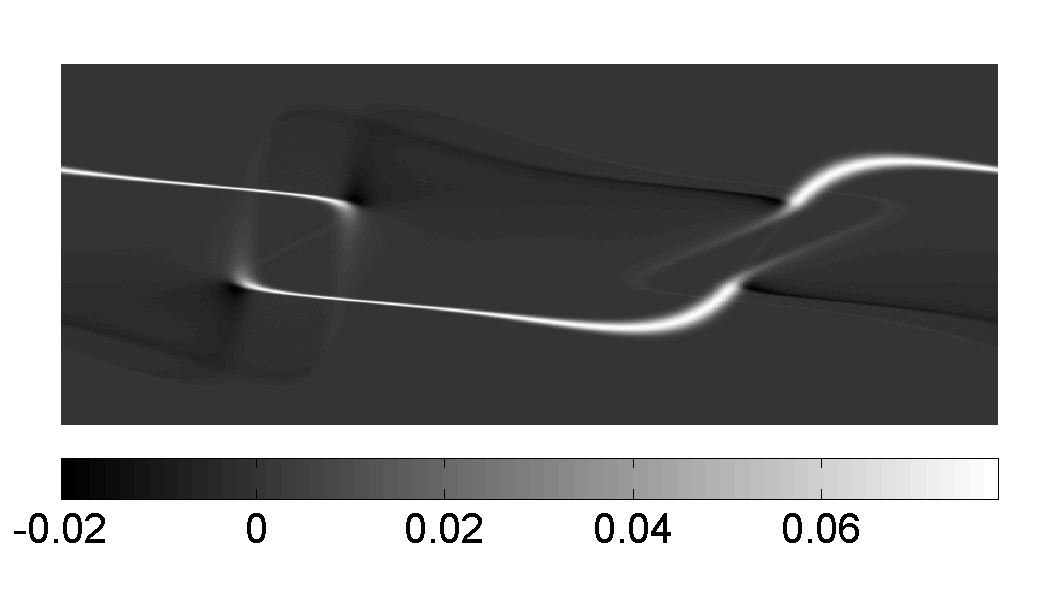}\ \ \ \ 
  \includegraphics[width=0.48\textwidth]{03_If}  
\caption{$X(f_1-f_2)$ (left) and $Xf_1$ (right). Some singularities of $Xf_1$ are nearly erased. The gray scale on the left is slightly different to allow for the negative values of  $X(f_1-f_2)$.}  
  \label{pic5}
\end{figure}

\subsection{Artifacts in the reconstruction}  
We illustrate Theorem~\ref{thm_xstar} now; what happens if we use $X^*X$ as an reconstruction attempt. If we apply this to the previous setup, we would get $f_1+f_2$ rather than $f_1-f_2$. 
 Here we still consider the domain $M$ with the metric from \eqref{eq:lensmetric} translated so that it is centered at $(0.2,0)$. Now, $f_1$ is a collection of peaked Gaustsians alternating in signs (Fig.~\ref{pic6}, left). Set $N=X^*X$. We apply $X^*$ to $Xf$, and then $N$ again to get $N^2f$. The advantage to this is that locally near $p_1$ and near $p_2$, the parametrix is $-C\Delta_g$ instead of a square root of the latter. Then we apply $-C\Delta_g$ to get $-C\Delta_g N^2f$. Near $f_1$, this recovers $f_1$ up to an operator of order $-1$ applied to it. It also ``recovers'' the artifact $f_2$. The results are shown in Figure~\ref{pic6}. 

\begin{figure}[h] 
  \centering
  \includegraphics[height=0.31\textheight]{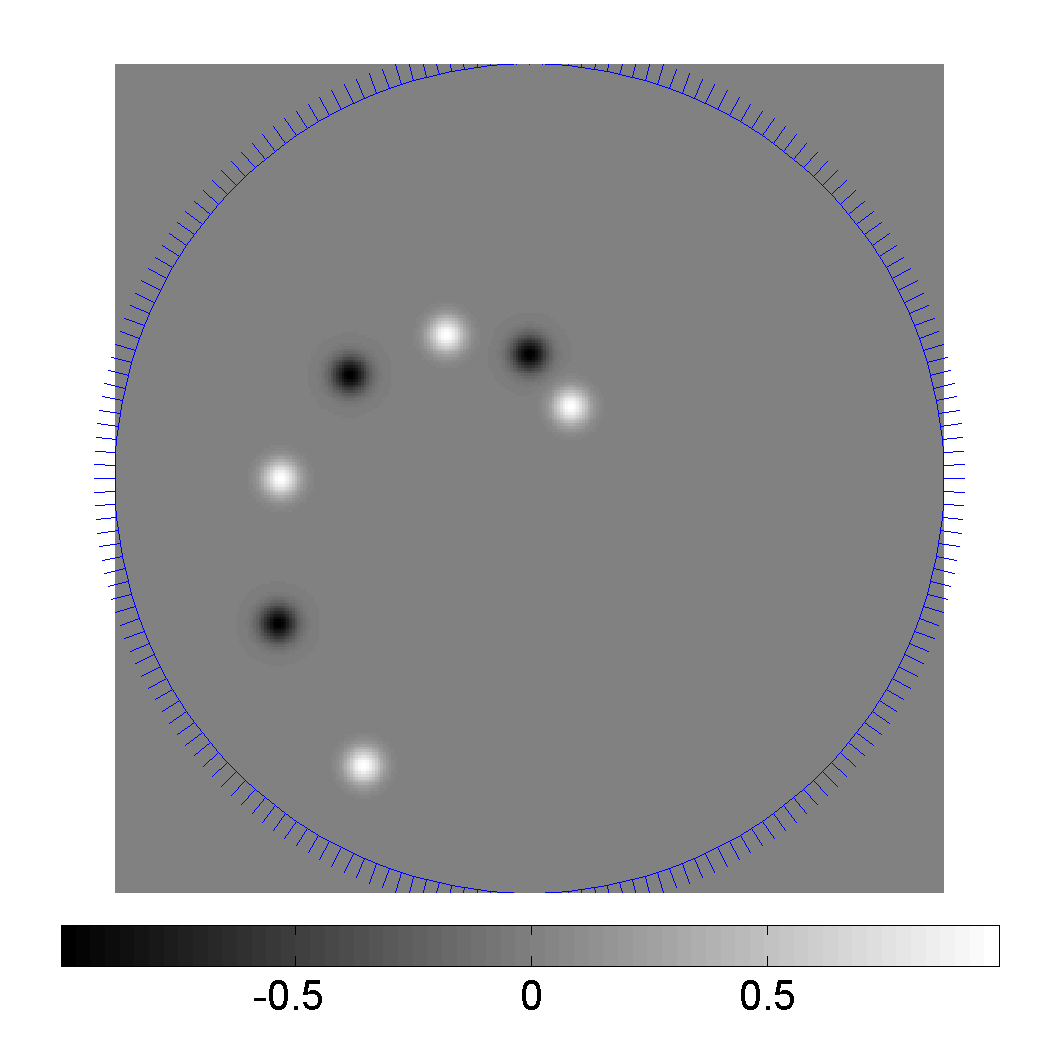}\ \ \ \ 
  \includegraphics[height=0.31\textheight]{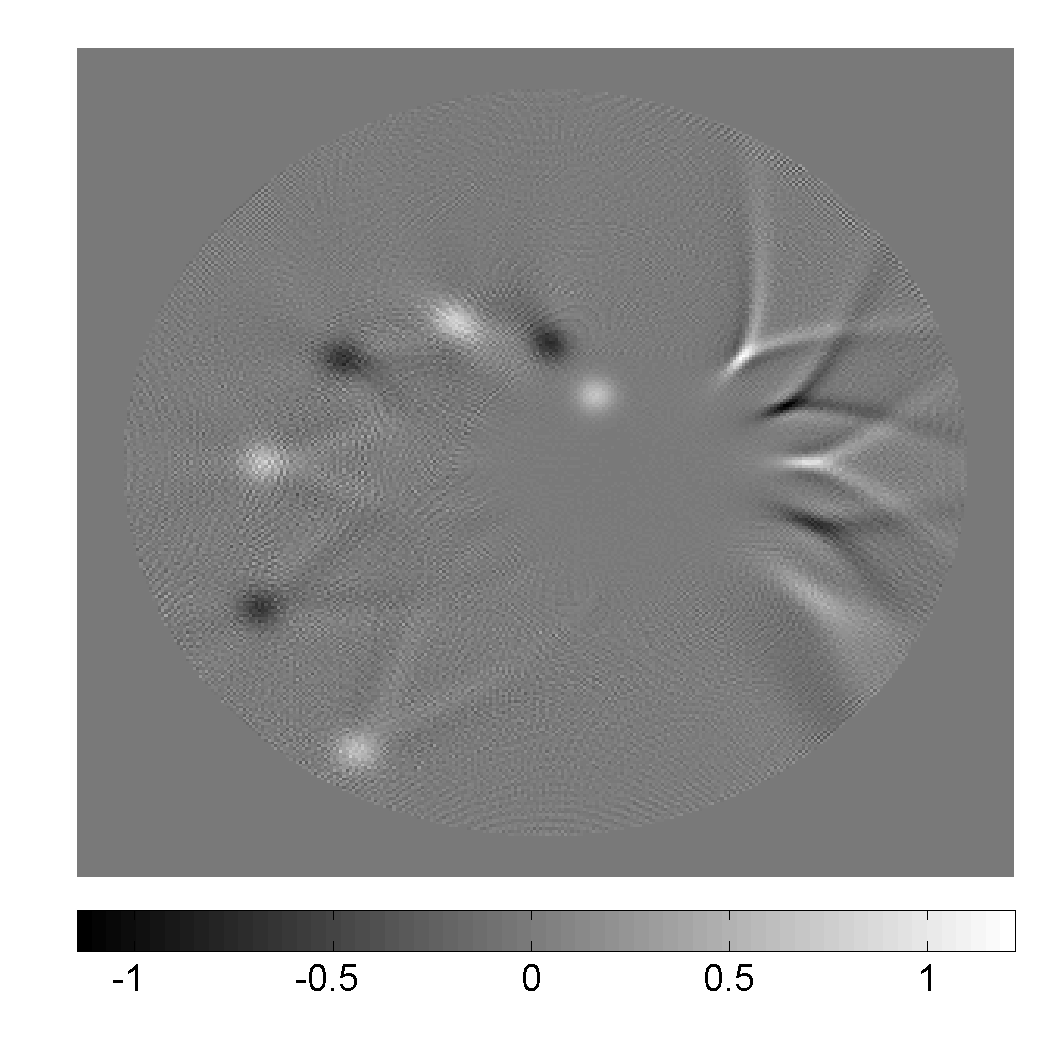}
\caption{$f_1$ (left) and $-C\Delta_g N^2 f_1$ (right).}  
  \label{pic6}
\end{figure}

The artifacts appear as an approximation of the union of the conjugate loci of each blob. Unlike the previous example, we see here $f_1+f_2$ (what we recover), not $f_1-f_2$ (what would cancel the singularities). The two blobs closer to the center create no artifacts because their conjugate loci are out of the disk $M$.


%
\end{document}